\theoremstyle{definition}
\newtheorem{definition}{Definition}[section]
\theoremstyle{plain}
\newtheorem{lemma}[definition]{Lemma}
\newtheorem{theorem}[definition]{Theorem}
\newtheorem{proposition}[definition]{Proposition}
\newtheorem{corollary}[definition]{Corollary}
\theoremstyle{remark}
\newtheorem{remark}[definition]{Remark}
\newtheorem{example}[definition]{Example}
\newcommand{\myint}{\operatorname{int}}
\newcommand{\mycl}{\operatorname{cl}}
\DeclareMathSymbol{\mlq}{\mathord}{operators}{``}
\DeclareMathSymbol{\mrq}{\mathord}{operators}{`'}
\begin{document}
\title[Nonvaluational OAGs of finite burden]{Nonvaluational ordered Abelian groups of finite burden}
\author[M. Fujita]{Masato Fujita}
\address{Department of Liberal Arts,
	Japan Coast Guard Academy,
	5-1 Wakaba-cho, Kure, Hiroshima 737-8512, Japan}
\email{fujita.masato.p34@kyoto-u.jp}

\begin{abstract}
	Consider an expansion $\mathcal R=(R,<,+,\ldots)$ of an ordered divisible Abelian group of finite burden defining no nonempty subset $X$ of $R$ which is dense and codense in a definable open subset $U$ of $R$ with $X \subseteq U$.
	We further assume that $\mathcal R$ is nonvaluational, that is, for every nonempty definable subsets $A,B$ of $R$ with $A <B$ and $A \cup B=R$, $\inf\{b-a\;|\;a \in A, b \in B\}=0$.
	Then, $\mathcal R$ is $*$-locally weakly o-minimal.
	We also give a complete description of sets definable in a definably complete expansion of ordered group of burden two if it defines an infinite discrete set.
\end{abstract}

\subjclass[2020]{Primary 03C64; Secondary 03C45}

\keywords{$*$-local weak o-minimality; burden}
\maketitle

\section{Introduction}\label{sec:intro}

Dolich and Goodrick have extensively studied expansions of ordered Abelian groups of finite burden in \cite{DG,DG2,DG3}. 
We study such structures in this paper.
These structures possibly define unary subsets of open sets $U$ which are dense and codense in $U$.  
In this section, we call such a set \textit{wild} for short.
For instance, $\operatorname(\mathbb R,<,+,\mathbb Z,\mathbb Q)$ is of burden three by \cite[Propositon 3.1]{DG} and defines a wild set $\mathbb Q$.
If the structures under consideration are definably complete, their open core enjoy tame topological property.
In fact, combining \cite[Theorem 1.1]{Fuji_core} with \cite[Corollary 2.13(3)]{DG}, we immediately conclude that every definably complete expansion of an ordered group with strong theory has a locally o-minimal open core.
See \cite{DMS,MS} for the definition of open cores.
This implies that wild topological properties of sets definable in these structures cause from the existence of wild sets.  

We want to extend this result to the non-definably complete case.
A question is whether sets definable in (not necessarily definably complete) expansions of ordered Abelian groups with strong theories enjoy somewhat tame topological properties if the expansions define no wild set.
We give a partial affirmative answer to this question.
We employ a stronger assumption that the structure under consideration is nonvaluational and of finite burden.
In Section \ref{sec:convex}, we show that this structure is a $*$-locally weakly o-minimal structure defined in \cite{Fuji_weak} (cf.\ Theorem \ref{thm:locally_weakly_omin}).
New terms in the above statement are defined in Section \ref{sec:definition}.

The second ingredient of this paper is an extension of the studies in \cite{DG2,DG3} on definably complete expansions of ordered groups of burden two.
In such case, the structure either has an o-minimal open core or defines an infinite discrete closed set.
In Section \ref{sec:burden_two}, we give a complete description of the structure when it defines an infinite discrete closed set.

Throughout, let $\mathcal R=(R,<,+,\ldots)$ be an expansion of an ordered divisible Abelian group.
A structure satisfying weaker conditions is denoted by $\mathcal M=(M,\ldots)$ instead of $\mathcal R$.
The term ‘definable’ means ‘definable in the given structure with parameters’.
We assume that $R$ is equipped with the order topology induced from the linear order $<$ and the topology on $R^n$ is the product topology of the order topology on $R$.
An open interval is a set of the form $(b_1,b_2):=\{x \in R\;|\; b_1<x<b_2\}$ for some $b_1,b_2 \in R \cup \{\pm \infty\}$.
For two subsets $A_1$ and $A_2$ of $R$, $A_1 < A_2$ means that, for every $a_1 \in A_1$ and $a_2 \in A_2$, $a_1<a_2$.
The set $\{a_1+a_2 \in R\;|\; a_1 \in A_1, a_2 \in A_2\}$ is denoted by $A_1+A_2$.
The condition $A_1<\{a_2\}$ is simply denoted by $A_1<a_2$ for $a_2 \in R$.
We define $a_2+A_1$ similarly.
$\myint(A)$ and $\mycl(A)$ denote the interior and the closure of a subset $A$ of $R^n$, respectively.

\section{Definitions}\label{sec:definition}
We quote the definitions of burden, definable Dedekind completion, etc. 
We first recall the definition of burdens (cf.\ \cite{Adler, DG, DG2, DG3,Touchard}).

We fix a complete first-order theory $T$.
Let $p(\overline{x})$ be a partial type.
An \textit{inp-pattern of depth $\kappa$  in $p(\overline{x})$} is a sequence 
$(\phi_{\alpha}(\overline{x};\overline{y})\;|\; \alpha < \kappa)$ of formulas, a sequence $(k_\alpha\;|\; \alpha < \kappa)$ of positive integers and a sequence $(\overline{b}_i^\alpha\;|\; \alpha < \kappa, i < \omega)$ of tuples from some model $\mathcal M$ of $T$ such that:  
\begin{itemize}
	\item $\{\phi_\alpha(\overline{x};\overline{b}_i^\alpha)\;|\; i<\omega\}$ is $k_{\alpha}$-inconsistent for every $\alpha<\kappa$;
	\item $\{\phi_\alpha(\overline{x};\overline{b}_{\eta(\alpha)}^\alpha)\;|\; \alpha<\kappa\}$ is consistent with $p(\overline{x})$ for all map $ \eta:\kappa \to \omega$.
\end{itemize}
The partial type $p(\overline{x})$ has \textit{burden $<\kappa$} if there is no inp-pattern of depth $\kappa$ in $p(\overline{x})$.
If the least $\kappa$ such that the burden of $p(\overline{x})$ is less than $\kappa$ is a successor cardinal with $\kappa=\lambda^+$, then we say that \textit{the burden of $p(\overline{x})$} is $\lambda$.
If the burden of the partial type $x=x$ in a single free variable $x$ exists and is equal to $\kappa$, we say that \textit{the burden of $T$} is $\kappa$.
The theory $T$ is \textit{inp-minimal} if its burden is $1$.
The theory $T$ is \textit{strong} if, for any finite tuple of variables $\overline{x}$, every inp-pattern in the partial type $\overline{x}=\overline{x}$ has finite depth.

Next, we recall the definitions of definable Dedekind completion, local o-minimality and $*$-local weak o-minimality (cf.\ \cite{Fuji_weak, TV}).
Let $\mathcal M=(M,<,\ldots)$ be an expansion of a dense linear order without endpoints.
$\mathcal M$ is \textit{definably complete} if any definable subset $X$ of $M$ has the supremum and  infimum in $M \cup \{\pm \infty\}$.
A \textit{definable gap} is a pair $(A,B)$ of nonempty definable subsets of $M$ such that 
\begin{itemize}
	\item $M=A \cup B$;
	\item $a<b$ for all $a \in A$ and $b \in B$;
	\item $A$ does not have a largest element and $B$ does not have a smallest element.
\end{itemize}
Note that 'being a definable gap' is expressed by a first-order formula when formulas defining $A$ and $B$ are given.
A definably complete structure has no definable gap.

Set $\overline{M}=M \cup \{\text{definable gaps in }M\}$. 
We can naturally extend the order $<$ in $M$ to an order in $\overline{M}$, which is denoted by the same symbol $<$.
For instance, we define $a<(A,B)$ by $a \in A$ when $a \in M$ and $(A,B)$ is a definable gap.
We define $(A_1,B_1) \leq (A_2, B_2)$ by $A_1 \subseteq A_2$ for definable gaps  $(A_1,B_1)$ and $(A_2, B_2)$.
The linearly ordered set $(\overline{M},<)$ is called the \textit{definable Dedekind completion} of $\mathcal M$.

An arbitrary open interval $I$ in $M$ is of the form $(b_1,b_2)$, where $b_1,b_2 \in M \cup \{\pm \infty\}$.
We set $\overline{I}=\{x \in \overline{M}\;|\; b_1<x<b_2\}$.
We define $\overline{I}$ for every interval $I$ in $M$ in the same manner.
Throughout, we use these overlined notations to represent Dedekind completions and their subsets defined above.

In \cite{Fuji_weak}, we do not assume that the addition $+$ is definable.
Consider the case in which $\mathcal M$ is an expansion of an ordered Abelian group and introduce a new concept.
For every definable gap $\alpha=(A,B)$ and $b \in M$, the definable gap $(\{x+b \in M\;|\;x \in A\}, \{x+b \in M\;|\;x \in B\})$ is denoted by $\alpha+b$.
We say that a definable gap $\alpha$ is \textit{nonvaluational (n.v.\ for short)} if $\alpha+b \neq \alpha$ for every $0 \neq b \in M$.
In other words, $\alpha$ is n.v.\  if and only if $\inf\{y-x\;|\; x \in A, y\in B\}=0$.
The term `nonvaluational' was chosen named after nonvaluational weakly o-minimal structures \cite{MMS, Wencel}.
A \textit{valuational} definable gap is a definable gap which is not n.v.
An element $a \in \overline{M}$ is \textit{n.v.}\ if it is either in $M$ or an n.v.\  definable gap.
We call $\mathcal M$ \textit{nonvaluational (n.v.)}\  if there are no valuational definable gaps.
Note that being n.v.\ is a first-order property, that is, for two structures $\mathcal M_1$ and $\mathcal M_2$ which are elementarily equivalent to each other, $\mathcal M_1$ is n.v.\ if and only if $\mathcal M_2$  is n.v.

\begin{example}
	Let $(M,<,+)$ be an ordered subgroup of $(\mathbb R,<,+)$.
	Consider an expansion of $(M,<,+)$.
	Every definable gap $\alpha$ corresponds to a real number $r_{\alpha} \in \mathbb R$.
	Therefore, every definable gap is n.v.
\end{example} 

\begin{example}
	Put $M=\mathbb R \times \mathbb R$.
	Let $<$ be the lexicographic order on $M$ and we define $+:M \times M \ni ((x_1,y_1),(x_2,y_2)) \mapsto (x_1+x_2,y_1+y_2) \in M$.
	Let $\sim$ be the equivalence relation on $M$ defined by $(x_1,y_1) \sim (x_2,y_2) \Leftrightarrow x_1=x_2$.
	Consider the structure $(M,<,+,\sim)$.
	Observe that $A=\{(x,y) \in M\;|\; x \leq 0\}=\{z \in M\;|\; z \sim (0,0) \vee z < (0,0)\}$ and $B=\{(x,y) \in M\;|\; x > 0\}$ are definable sets and $\alpha=(A,B)$ is a valuational definable gap. 
	In fact, $\alpha+(0,y) =\alpha$ for every $y \in \mathbb R$.
\end{example}

For a nonempty definable subset $X$ of $M$, we define $\sup X \in \overline{M} \cup \{+\infty\}$ as follows:
We set $\sup X=+\infty$ when, for any $a \in M$, there exists $x \in X$ with $x>a$.
Assume that there exists $z \in M$ such that $x<z$ for every $x \in X$.
Set $B=\{y \in M\;|\; \forall x \in X\ y>x\}$ and $A=M \setminus B$.
If $B$ has a smallest element $m$, we set $\sup X=m$.
If $A$ has a largest element $m'$, we set $\sup X=m'$.
Finally, if $(A,B)$ is a definable gap, we set $\sup X=(A,B) \in \overline{M}$.

$\mathcal M$ is a \textit{locally o-minimal structure} if, for every definable subset $X$ of $M$ and for every point $a \in M$, there exists an open interval $I$  such that $a \in I$ and $X \cap I$ is a union of a finite set and finitely many open intervals \cite{TV}.
$\mathcal M$ is a \textit{$*$-locally weakly o-minimal structure} if, for every definable subset $X$ of $M$ and for every point ${a}\in \overline{M}$, there exists an open interval $I$  such that ${a} \in \overline{I}$ and $X \cap I$ is a union of a finite set and finitely many open convex sets.
Note that a $*$-locally weakly o-minimal structure is locally o-minimal by \cite[Proposition 2.4]{Fuji_weak}. 

\section{Finite burden case}\label{sec:convex} 
We consider the case in which $\mathcal R$ is of finite burden.
Suppose that $\mathcal R$ is $\kappa$-saturated for a sufficiently large cardinal $\kappa$ in this section.
We prepare lemmas to prove Theorem \ref{thm:locally_weakly_omin}.
For every $0<b \in R$ and an n.v.\  element $a \in \overline{R}$, the set $\{x \in R\;|\; a<x<a+b\}$ is denoted by $(a,a+b)$.
We define $(a-b,a)$ similarly.

We prove two modified versions of \cite[Lemma 2.10]{DG}.
The following lemma is the first one:
\begin{lemma}\label{lem:DM_minor_ext}
Let $\alpha \in \overline{R}$ be an n.v.\  element.
Let $D_i$ be infinite definable discrete sets and $\varepsilon_i>0$ for $i \in \mathbb N$ satisfying the following conditions:
\begin{enumerate}
	\item[(1)] $D_{i+1} \subseteq (\alpha,\alpha +\varepsilon_i/3)$;
	\item[(2)] If $x \in D_i$, then $(x-\varepsilon_i,x+\varepsilon_i) \cap D_i = \{x\}$;
\end{enumerate} 
Then $\operatorname{Th}(\mathcal R)$ is not strong.
\end{lemma}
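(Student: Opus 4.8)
\emph{Plan.} The statement to prove is that $\operatorname{Th}(\mathcal R)$ is not strong, so by definition the goal is to exhibit an inp-pattern of infinite (say $\omega$) depth in the one-variable partial type $x=x$. Since $\mathcal R$ is $\kappa$-saturated, it suffices to produce, inside $\mathcal R$ itself, formulas $\phi_i$, positive integers $k_i$, and parameters $\overline{b}^i_j$ $(i<\omega,\ j<\omega)$ such that every row $\{\phi_i(x;\overline{b}^i_j):j<\omega\}$ is $k_i$-inconsistent and every path $\{\phi_i(x;\overline{b}^i_{\eta(i)}):i<\omega\}$ is consistent.

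First I would record a consequence of the hypotheses. Picking two distinct elements $d<d'$ of the infinite set $D_{i+1}$, condition (1) forces $\alpha<d<d'<\alpha+\varepsilon_i/3$, so $d'-d<\varepsilon_i/3$, while condition (2) gives $d'-d\ge\varepsilon_{i+1}$; hence $\varepsilon_{i+1}<\varepsilon_i/3$. Thus the $\varepsilon_i$ strictly decrease, the intervals $(\alpha,\alpha+\varepsilon_i/3)$ are nested with common lower endpoint $\alpha$, and the convex hull of each $D_{i+1}$ has width $<\varepsilon_i/3$. For the $i$-th row I would then take $\phi_i(x;y):=x-y\in D_{i+1}$ and $k_i=2$, and let the $i$-th column sequence $(y^i_j)_{j<\omega}$ consist of pairwise distinct elements of a short interval of width $<\varepsilon_{i+1}$, deferring the precise placement of these intervals to the last step. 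Row-inconsistency is then immediate: if $x-y^i_j$ and $x-y^i_{j'}$ both lay in $D_{i+1}$ for some $j\ne j'$, their difference $y^i_{j'}-y^i_j$ would be a nonzero difference of two elements of $D_{i+1}$ of absolute value $<\varepsilon_{i+1}$, contradicting (2); so each row is $2$-inconsistent.

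The substance lies in path-consistency, which I expect to be the main obstacle. Given $\eta:\omega\to\omega$, by saturation one must exhibit, for each $n$, a point of $\bigcap_{i\le n}(y^i_{\eta(i)}+D_{i+1})$ in $R$. Unwound, this is a request for a simultaneous choice of $d_i\in D_{i+1}$ with all $y^i_{\eta(i)}+d_i$ equal --- equivalently, with $y^{i+1}_{\eta(i+1)}-y^i_{\eta(i)}\in D_{i+1}-D_{i+2}$ for every $i<n$, plus the matching multi-fold compatibility. The columns $y^i_j$ therefore have to be constructed by a recursion on $i$ so that these membership relations hold for every index pair while each fixed row's columns stay pairwise $<\varepsilon_{i+1}$ apart; here one uses that each $D_{i+1}$ is infinite and, by (1), confined to the short interval $(\alpha,\alpha+\varepsilon_i/3)$ just above the nonvaluational element $\alpha$, the slack furnished by the factor $1/3$ and by $\varepsilon_{i+1}<\varepsilon_i/3$, and the fact that $\alpha$ is n.v.\ (so the sets $(\alpha,\alpha+b)$ are genuine and nonempty and one may translate elements of $\mathcal R$ across $\alpha$ without creating spurious gaps). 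This amounts to a ``Chinese remainder''-type coordination of the sets $D_{i+1}$, made genuinely delicate by the fact that the scales $\varepsilon_i$ are nested rather than coprime, so that each $D_{i+1}$ must be fitted inside the span swept out by the translates of $D_i$; this is where essentially all the work is. Once it is carried out, the data $\bigl((\phi_i)_{i<\omega},(2)_{i<\omega},(y^i_j)_{i,j<\omega}\bigr)$ is an inp-pattern of depth $\omega$ in $x=x$, so $\operatorname{Th}(\mathcal R)$ is not strong.
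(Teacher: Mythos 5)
There is a genuine gap: you explicitly defer the path-consistency step (``this is where essentially all the work is \dots Once it is carried out''), and that step is never carried out. With your choice of formula $\phi_i(x;y):=\mlq x-y\in D_{i+1}\mrq$, path consistency demands a single $x$ lying in $\bigcap_{i\le n}\bigl(y^i_{\eta(i)}+D_{i+1}\bigr)$ for \emph{every} path $\eta$, i.e.\ the columns must satisfy $y^{i'}_{j'}-y^i_j\in D_{i+1}-D_{i'+1}$ (and all higher-order compatibilities) for all index pairs simultaneously. Nothing in hypotheses (1) and (2) guarantees that the sets $D_{i+1}$ have the kind of product-like structure that would make such a ``Chinese remainder'' coordination possible, and this is exactly why the standard argument (in \cite[Lemma 2.10]{DG} and in Lemma \ref{lem:key_extended} of this paper) does not use exact membership in a translate of $D_{i+1}$ but rather a formula of the form $\exists w\,(w\in Y_n\wedge y<x-w<z)$, where $Y_n$ is an iterated sumset of the $D$'s and $(y,z)$ ranges over a family of well-separated windows; the existential witness $w$ is what buys path consistency. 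So the proposed row formulas are likely not even the right ones, and in any case the claim that the pattern exists is asserted, not proved.

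You have also missed that the lemma does not require redoing the inp-pattern construction at all. The paper's proof is a short reduction: when $\alpha\in R$ the statement \emph{is} \cite[Lemma 2.10]{DG}, and when $\alpha$ is a gap $(A,B)$ one uses saturation (via the formulas $\phi_{j,N}(r)$ saying $D_j$ has at least $N$ points on each side of $r$) to find $t_j$ splitting $D_j$ into two infinite halves, sets $E_j:=\{x>0: x+t_j\in D_j\}$, checks $E_j\subseteq(0,\varepsilon_j/3)$ using that $\alpha$ is n.v., and applies \cite[Lemma 2.10]{DG} to the $E_j$'s. The role of nonvaluationality is precisely to make this translation argument work; in your sketch it appears only as a vague remark that one ``may translate elements across $\alpha$ without creating spurious gaps,'' which does not engage with where the hypothesis is actually used.
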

\begin{proof}
	First, if $\alpha \in R$, then this lemma is identical to \cite[Lemma 2.10]{DG}.
	Therefore, we may assume that $\alpha$ is a definable gap $(A,B)$.
	For $i \in \mathbb N$, we construct infinite definable discrete sets $E_i$ and $t_i \in R$ such that $E_i \subseteq (0,\varepsilon_i/3)$ and $t_i+E_i \subseteq D_i$.
		
	For $j,N \in \mathbb N$, consider the formula $$\phi_{j,N}(r):= \mlq |(-\infty,r) \cap D_j| \geq N \wedge |(r,\infty) \cap D_j| \geq N\mrq,$$ where the cardinality of a set $S$ is denoted by $|S|$.
	The family of formulas $\{\phi_{j,N}\}_{N \in \mathbb N}$ is finitely satisfiable because $D_j$ is an infinite set.
	Since $\mathcal R$ is $\kappa$-saturated, we can find $t_j \in R$ with $\mathcal R \models \bigwedge_{N \in \mathbb N}\phi_{j,N}(t_j)$.
	Put $E_j:=\{x \in R\;|\; x>0,\ x+t_j \in D_j\}$.
	The inclusion $t_j+E_j \subseteq D_j$ is obvious.
	
	Let $e \in E_j$.
	It is obvious $e>0$.
	By clause (1), we have $t_j>\alpha$ and $e+t_j-\alpha<\varepsilon_{j-1}/3$.
	Therefore, $e<\varepsilon_{j-1}/3$. 
	We have showed $E_j \subseteq (0,\varepsilon_j/3)$.
	
	Now, apply 	\cite[Lemma 2.10]{DG} to $E_j$'s, then we obtain this lemma.	
\end{proof}

We get the following corollary:
\begin{corollary}\label{cor:accumulation}
	Suppose $\operatorname{Th}(\mathcal R)$ is strong.
	Let $D$ be an infinite discrete definable subset of $R$.
	Then, for every n.v.\  $a \in \overline{R}$, there exists an open interval $I$ such that $a \in \overline{I}$ and $D \cap I$ is a finite set.
\end{corollary}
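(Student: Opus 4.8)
\noindent\emph{Proof proposal.} The plan is to argue by contradiction: suppose the conclusion fails, so that $D\cap I$ is infinite for every open interval $I$ with $a\in\overline I$, and produce sequences $(D_i)_{i\in\mathbb N}$, $(\varepsilon_i)_{i\in\mathbb N}$ satisfying the hypotheses of Lemma \ref{lem:DM_minor_ext} with $\alpha=a$; that lemma then contradicts the assumed strength of $\operatorname{Th}(\mathcal R)$. First I would reduce to one-sided accumulation: applying the hypothesis-for-contradiction to the intervals $(a-\delta,a+\delta)$ ($\delta\in R$, $\delta>0$), for every such $\delta$ at least one of $D\cap(a,a+\delta)$, $D\cap(a-\delta,a)$ is infinite, and a fixed side works for all $\delta$; replacing $D$ by $-D$ and $a$ by $-a$ (which is again n.v., the definition of n.v.\ being symmetric under $x\mapsto -x$) if necessary, we may assume $D\cap(a,a+\delta)$ is infinite for every $\delta>0$. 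The final conclusion for the original $D$ and $a$ then follows by applying the interval-negating map $I\mapsto -I$.

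The technical core is the following consequence of $\kappa$-saturation: \emph{for every infinite definable discrete set $E\subseteq R$ there is $\delta>0$ such that $E^{(\delta)}:=\{x\in E\mid (x-\delta,x+\delta)\cap E=\{x\}\}$ is infinite.} To see this, note that for each $n\in\mathbb N$ one may choose $n$ distinct points of $E$; each is isolated in $E$ since $E$ is discrete, so a single $\delta>0$ witnesses the displayed condition at all $n$ of them simultaneously. Hence the partial type in one variable $\delta$ consisting of ``$\delta>0$'' together with the formulas ``$E^{(\delta)}$ has at least $n$ elements'' ($n\in\mathbb N$) is finitely satisfiable, and $\kappa$-saturation produces a realization $\delta>0$. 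One should observe further that $E^{(\delta)}\subseteq E$ and that $E^{(\delta)}$ satisfies clause (2) of Lemma \ref{lem:DM_minor_ext} with this same $\delta$, because $(x-\delta,x+\delta)\cap E=\{x\}$ already holds for $x\in E^{(\delta)}$.

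Now I would build the required data recursively. Let $\eta_1\in R$ be any positive element. Given $\eta_i>0$, the set $D\cap(a,a+\eta_i)$ is infinite by the one-sided accumulation, definable, and discrete (a subset of $D$); apply the boxed observation to it to obtain $\varepsilon_i>0$ with $D_i:=\bigl(D\cap(a,a+\eta_i)\bigr)^{(\varepsilon_i)}$ infinite, and set $\eta_{i+1}:=\varepsilon_i/3$. Then each $D_i$ is an infinite definable discrete set; clause (2) holds by the previous paragraph; and $D_{i+1}\subseteq D\cap(a,a+\eta_{i+1})\subseteq(a,a+\varepsilon_i/3)$, so clause (1) holds. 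Lemma \ref{lem:DM_minor_ext} applied to $\alpha=a$ and these sequences yields that $\operatorname{Th}(\mathcal R)$ is not strong, the desired contradiction.

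The one step I expect to require care is precisely the boxed saturation observation, i.e.\ the verification that the $D_i$ are genuinely infinite. It is tempting to extract an infinite uniformly separated \emph{subset of $D$ itself} near $a$, but this is impossible already for sets of the shape $\{a+1/n\}$, where consecutive gaps tend to $0$: the separation constants $\varepsilon_i$ are then forced to be non-standardly small and the witnessing sets $D_i$ are internally finite of non-standard cardinality, so $\kappa$-saturation is essential and the naive Archimedean counting intuition is misleading. Phrasing the argument through the type above — rather than through an ``isolating radius'' function, whose value might be a definable gap rather than an element of $R$ — keeps this step clean.
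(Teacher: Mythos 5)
Your proof is correct and follows essentially the same route as the paper's: assume for contradiction that $D$ accumulates at $a$, use saturation on the formulas ``$|D(r)|\geq N$'' to extract infinite definable $\varepsilon_i$-separated subsets $D_i$ nested into $(a,a+\varepsilon_{i-1}/3)$ (using that $a$ is n.v.), and invoke Lemma \ref{lem:DM_minor_ext}. The only cosmetic differences are your explicit one-sided reduction via $-D$ and your uniform treatment of the case $a\in R$, which the paper instead delegates to \cite[Corollary 2.13(1)]{DG}.
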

\begin{proof}
	If $a \in R$, this corollary is \cite[Corollary 2.13(1)]{DG}.
	Suppose $a=(A,B)$ is a definable gap.
	We may assume $a<D$ without loss of generality.
	Assume for contradiction that $D \cap I$  is an infinite set for arbitrary open interval $I$ with $a \in \overline{I}$.
	Put $D(r):=\{x \in D\;|\; (x-r,x+r) \cap D =\{x\}\}$.
	Let $\psi_N(r)=\mlq r>0 \wedge |D(r)| \geq N \mrq$ for $N \in \mathbb N$.
	We can find $r_N$ with $\mathcal R \models \psi_N(r_N)$ because $D$ is discrete and infinite.
	By saturation, there exists $\varepsilon_0>0$ such that $\mathcal R \models \bigwedge_{N \in \mathbb N}\psi_N(\varepsilon_0)$.
	Put $D_0:=D(\varepsilon_0)$. 
	Since $a$ is n.v., we can find an open interval $I$ with $a \in \overline{I}$ and $I<a+\varepsilon_0/3$.
	Considering $D \cap I$ instead of $D$, we may assume that $D \subseteq (a,a+\varepsilon_0/3)$.
	We inductively define $D_1, D_2$ in the same manner as above.
	We have constructed a family of infinite discrete definable subsets $D_i$ of $R$ satisfying the conditions in Lemma \ref{lem:DM_minor_ext}.
	This contradicts the assumption that $\operatorname{Th}(\mathcal R)$ is strong.
\end{proof}

Let $X$ be a definable subset of $R$.
For every $x \in X$, the \textit{maximal convex subset $D(X,x)$ of $X$ containing $x$} is a definable set defined by the following formula:
\begin{align*}
\Phi_{X,x}(y)&=\mlq (y=x) \vee (y<x \wedge \forall z \ y \leq z<x \rightarrow z \in X) \\
&\vee (x<y \wedge \forall z \ x<z \leq y \rightarrow z \in X)\mrq.
\end{align*}
Observe that, if $x' \in D(X,x)$, $D(X,x')=D(X,x)$.
We put $l(X,x):=\sup\{r>0\;|\; \exists y_1,y_2 \in D(X,x)\ |y_1-y_2|=r\} \in \overline{R} \cup \{\infty\}$.
Let $\#_{\text{convex}}S$ be the cardinality of the family of maximal convex subsets of $S$ for every definable subset $S$ of $R$.
We use these notations in the rest of this section.
Observe that $D(X,x)=\{x\}$ and $l(X,x)=0$ if $X$ is discrete.
Note that the conditions that $\#_{\text{convex}}X \geq N$ and $l(X,x) >r$ are first-order for $N \in \mathbb N$ and $0<r \in R$.
For instance, the former is expressed by
$$\exists x_1,x_2 \in X\ D(X,x_1) \cap D(X,x_2)=\emptyset$$
if $N=2$,  and the latter is described as
$$\exists \varepsilon>0\ \exists y \in D(X,x)\ (y-r-\varepsilon,y+r+\varepsilon) \subseteq D(X,x).$$ 

The following lemma is the second modified version of \cite[Lemma 2.10]{DG}:
\begin{lemma}\label{lem:key_extended}
Suppose $\mathcal R$ is n.v.
Let $N \geq 2$ be a natural number.
Let $\{X_n\}_{n=1}^N$ be a family of definable subsets of $R$ having infinitely many maximal convex subsets.
Let $\{\varepsilon_n\}_{n=1}^N$ be a decreasing family of positive elements in $R$.
Suppose the following condition is satisfied:
\begin{enumerate}
	\item[(1)] $X_{n+1} \subseteq (0,\varepsilon_n/3)$,
	\item[(2)] If $x \in X_n$, then $(x-2\varepsilon_n,x+2\varepsilon_n) \cap X_n \subseteq D(X_n,x)$.
\end{enumerate}
Then $\operatorname{Th}(\mathcal R)$ is of burden $\geq N-1$.
\end{lemma}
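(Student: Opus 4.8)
The plan is to exhibit an inp-pattern of depth $N-1$ in the partial type $x=x$, all of whose $k_{\alpha}$ equal $2$; as the burden of $\operatorname{Th}(\mathcal R)$ is that of $x=x$, this shows $\operatorname{Th}(\mathcal R)$ is of burden $\ge N-1$. Two elementary facts drive the construction. First, fixing $1\le n\le N-1$ and choosing one point in each of $\omega$ many maximal convex subsets of $X_{n+1}$, clause (1) places these points in $(0,\varepsilon_n/3)$ while clause (2) makes them pairwise at distance $\ge 2\varepsilon_{n+1}$, so $k\varepsilon_{n+1}<\varepsilon_n/3$ for every $k\in\mathbb N$; hence the ``tail sums'' $F_{n+1}:=X_{n+1}+\cdots+X_N$ (with $F_N:=X_N$) are definable subsets of $(0,\varepsilon_n/2)$ for all $1\le n\le N-1$. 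Second, $X_1$ has at most two unbounded maximal convex subsets (it is not all of $R$, since it has infinitely many components), so for each $n\in\{1,\dots,N\}$ we may fix pairwise distinct \emph{bounded} maximal convex subsets $C^n_0,C^n_1,\dots$ of $X_n$; fix also a positive $\delta\in R$ with $N\delta<\varepsilon_N$, and for each $n,j$ a point $p^n_j\in C^n_j$ with $p^n_j-\delta$ a lower bound of $C^n_j$ (possible because $C^n_j$ contains elements arbitrarily close to its infimum). The condition ``$y-\delta$ is a lower bound of $D(X_m,y)$'' is first-order and will serve in place of ``$y$ is near the left endpoint of its component'' (an endpoint which may be a gap); note that if $y,y'$ lie in one maximal convex subset of some $X_m$ and each satisfies it, then $|y-y'|\le\delta$.

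The parameters of the pattern are the $p^n_j$. For $1\le n\le N-1$, let $\phi_n(x;c)$ assert the existence of $y_1\in X_1,\dots,y_{n-1}\in X_{n-1}$ such that, for each $1\le l\le n-1$, ``$y_l-\delta$ is a lower bound of $D(X_l,y_l)$'' and $0\le x-y_1-\cdots-y_{l-1}-y_l<\varepsilon_l$, and with $x-y_1-\cdots-y_{n-1}\in D(X_n,c)+F_{n+1}$ (for $n=1$ this reads simply $x\in D(X_1,c)+F_2$). These are first-order, using the formulas $\Phi_{X_m,\cdot}$ and the definability of the $F_m$. For the consistency of transversals, given $\eta\colon\{1,\dots,N-1\}\to\omega$ I would take $x:=p^1_{\eta(1)}+\cdots+p^{N-1}_{\eta(N-1)}+p^N_0$; then the $y_l:=p^l_{\eta(l)}$ witness $\phi_n(x;p^n_{\eta(n)})$ for every $n$, because by the first fact $x-y_1-\cdots-y_l=p^{l+1}_{\eta(l+1)}+\cdots+p^N_0\in(0,\varepsilon_l/2)$ for $l<n$, each $p^l_{\eta(l)}$ has the lower-bound property, and $x-y_1-\cdots-y_{n-1}=p^n_{\eta(n)}+\bigl(p^{n+1}_{\eta(n+1)}+\cdots+p^N_0\bigr)\in D(X_n,p^n_{\eta(n)})+F_{n+1}$.

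It remains to check that row $n$ is $2$-inconsistent, which is the delicate point. Suppose $x$ satisfies both $\phi_n(x;p^n_j)$ and $\phi_n(x;p^n_{j'})$ with $j\neq j'$, witnessed by $(y_l)$ and $(y_l')$. I would prove $|y_l-y_l'|\le\delta$ for all $l$ by induction: with $A:=x-y_1-\cdots-y_{l-1}$ and $B:=x-y_1'-\cdots-y_{l-1}'$, the one-sided bounds $0\le A-y_l<\varepsilon_l$, $0\le B-y_l'<\varepsilon_l$ give $|y_l-y_l'|<|A-B|+\varepsilon_l\le(l-1)\delta+\varepsilon_l<2\varepsilon_l$ (using $N\delta<\varepsilon_N\le\varepsilon_l$); by clause (2), $y_l$ and $y_l'$ then lie in one maximal convex subset of $X_l$, so the lower-bound property yields $|y_l-y_l'|\le\delta$. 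Consequently $x-y_1-\cdots-y_{n-1}$ and $x-y_1'-\cdots-y_{n-1}'$ differ by at most $(n-1)\delta$, and since each lies within $\operatorname{diam}(F_{n+1})<\varepsilon_n/2$ of a point of $D(X_n,p^n_j)$, respectively $D(X_n,p^n_{j'})$, these two maximal convex subsets of $X_n$ — distinct because $j\neq j'$ — would contain points at distance $<\varepsilon_n/2+(n-1)\delta+\varepsilon_n/2=\varepsilon_n+(n-1)\delta<2\varepsilon_n$, contradicting clause (2). So each row is $2$-inconsistent, the $\phi_n$ together with the $p^n_j$ form an inp-pattern of depth $N-1$, and $\operatorname{Th}(\mathcal R)$ has burden $\ge N-1$.

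The main obstacle, as the above makes plain, is recovering the ``prefix'' $y_1+\cdots+y_{n-1}$ from $x$ accurately enough: the obvious candidates — the nearest point of $X_l$, or an endpoint of a maximal convex subset — are respectively non-unique or not an element of $R$ when the relevant infimum is a gap, so one must work with the gap-free ``lower bound'' surrogate and keep the accumulated errors, which grow by only $\delta$ at each of the $N$ levels, well below the separation scale $2\varepsilon_n$ at every level; this is why both the lower-bound device and the first observation (that $\varepsilon_m$ is negligible against $\varepsilon_n$ for $m>n$) are essential. An alternative would be to extract definable discrete sets from the $X_n$ and reduce to \cite[Lemma 2.10]{DG}, which is presumably where the nonvaluationality hypothesis on $\mathcal R$ would be used.
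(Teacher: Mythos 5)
Your argument is correct, and it reaches the inp-pattern by a genuinely different route than the paper. The paper also builds sumsets in the spirit of \cite[Lemma 2.10]{DG}, but it uses the \emph{prefix} sums $Y_n=X_1+\cdots+X_n$: it first uses nonvaluationality to shrink each $X_n$ so that every maximal convex subset has length $<\varepsilon_N/(2N)$, then (by saturation) extracts infinitely many pairwise far-apart intervals $I^n_i$ each meeting $X_n$ in infinitely many components, proves by induction that the components of $Y_n$ are exactly the sums $D(Y_{n-1},y)+D(X_n,d)$ and remain $\varepsilon_n$-separated, and takes as its $n$-th row the formulas $\exists w\,(w\in Y_n\wedge w<x\wedge l^{n+1}_i<x-w<r^{n+1}_i)$ with the interval endpoints as parameters. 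You instead use the \emph{suffix} sums $F_{n+1}=X_{n+1}+\cdots+X_N$, take as parameters points $p^n_j$ anchored within $\delta$ of the left ends of distinct components, and encode in the formula an approximate level-by-level decomposition of $x$; the separation analysis of the $Y_n$'s is replaced by your $\delta$-anchoring induction $|y_l-y_l'|\le\delta$. Your version dispenses with the saturation argument for the $I^n_i$ and with the structural claims (a)--(c) about sumsets, at the cost of more elaborate formulas. One presentational point: the step ``$C^n_j$ contains elements arbitrarily close to its infimum'' is \emph{exactly} where the nonvaluationality of $\mathcal R$ is used (if $\inf C^n_j$ were a valuational gap of width exceeding $\delta$, no $p^n_j$ with $p^n_j-\delta$ a lower bound would exist), so your closing speculation that n.v.\ would only enter via a reduction to \cite[Lemma 2.10]{DG} undersells your own proof --- you should state this explicitly rather than leave it parenthetical.
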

\begin{proof}
	Let $D$ be a maximal convex subset of $X_n$.
	Put $\alpha = \sup D \in \overline{R}$.
	Since $\alpha$ is n.v., $\{x \in D\;|\; \exists y \ x<y<x+\varepsilon_N/(4N),\ y \notin X_n\}=D \cap (\alpha-\varepsilon_N/(4N),\alpha)$ is a nonempty convex set.
	Therefore, by considering $\{x \in X_n\;|\; \exists y \ x<y<x+\varepsilon_N/(4N),\ y \notin X_n\}$ instead of $X_n$, we may assume that $l(X_n,x)<\varepsilon_N/(2N)$ for all $x \in X_n$.
	
	For $2 \leq n \leq N$, we will pick pairwise disjoint open intervals $I_i^n$ for $i \in \mathbb N$ satisfying the following conditions:
	\begin{enumerate}
		\item[(i)] $I_0^n < \varepsilon_{n-1}/3$;
		\item[(ii)] $I_{k+1}^n<I_k^n$ for $k \in \mathbb N$;
		\item[(iii)] $x-y>\varepsilon_n$ for every $k \in \mathbb N$, $x \in I_k^n$ and $y \in I_{k+1}^n$;
		\item[(iv)] $I_i^n \cap X_n$ consists of infinitely many maximal convex subsets.
	\end{enumerate}
	Put $t_0=\varepsilon_{n-1}/3$ and $X_{n,0}:=X_n$.
	For every $k \in \mathbb N$, let $\psi_k^n(r)$ be the formula expressing that both $X_{n,0} \cap \{x<r\}$ and $X_{n,0} \cap \{x>r\}$ have at least $k$ distinct maximal convex subsets. 
	Observe that $\mathcal R \models \exists r \ \psi_k^n(r)$ for $k \in \mathbb N$ using the assumption that $X_{n,0}$ has infinitely many maximal convex subsets.
	There exists $r_0>0$ such that $\mathcal R \models \bigwedge_{k \in \mathbb N} \psi_k^n(r_0)$ because $\mathcal R$ is $\kappa$-saturated.
	$X_{n,0} \cap \{x<r_0\}$ and $X_{n,0} \cap \{x>r_0\}$ have infinitely many maximal convex subsets. 
	Set $I_0^n:=(r_0,t_0)$, $t_1:=r_0-3\varepsilon_n$ and $X_{n,1}:=X_{n,0} \cap (-\infty,t_1)$.
	Observe that $X_{n,1}$ has infinitely many distinct maximal convex subsets by clause (2) and the choice of $r_0$.
	Repeating in this manner inductively, we construct  pairwise disjoint open intervals $I_k^n$ or $k \in \mathbb N$  satisfying conditions (i) through (iv).
	Let $l_k^n$ and $r_k^n$ be the left and right endpoints of $I_k^n$, respectively.
	
	Let $Y_1:=X_1$ and $$Y_n:=\{y+d\;|\; y \in Y_{n-1},\ d \in X_n\}$$ for $n >1$.
	Clauses (1) and (2) imply $\varepsilon_{n+1}<\varepsilon_n/2$.
	We show the following claims simultaneously by induction on $n$.
	\begin{enumerate}
		\item[(a)] For $n>1$, $d \in X_n$ and $y \in Y_{n-1}$, $D(Y_n,y+d)=D(Y_{n-1},y)+D(X_n,d)$;
		\item[(b)] $l(Y_n,x)<n\varepsilon_N/N$ and 
		\item[(c)] $(x-\varepsilon_n,x+\varepsilon_n) \cap Y_n \subseteq D(Y_n,x)$ for $n>0$ and $x \in Y_n$.
	\end{enumerate}
	Clauses (b) and (c) are obvious if $n=1$.
	Assume $n>1$.
	It is obvious that $D(Y_{n-1},y)+D(X_n,d)$ is a convex set for $d \in X_n$ and $y \in Y_{n-1}$.
	We have $l(D(Y_{n-1},y)+D(X_n,d),y+d)<n\varepsilon_N/N$ because $l(Y_{n-1},y)<(n-1)\varepsilon_N/N$ and $l(X_n,d)<\varepsilon_N/N$.
	Suppose that $(D(Y_{n-1},y)+D(X_n,d)) \cap (D(Y_{n-1},y')+D(X_n,d')) \neq \emptyset$ for some $y,y' \in Y_{n-1}$ and $d,d' \in X_n$.
	There exist $z \in D(Y_{n-1},y)$, $z' \in D(Y_{n-1},y')$, $e \in D(X_n,d)$ and $e' \in D(X_n,d')$ such that $z+e=z'+e'$.
	Since $l(X_n,x)<\varepsilon_N/(2N)$ for every $x \in X_n$, we have $z = z' + (e'-e) \in (z'-\varepsilon_N/N,z'+\varepsilon_N/N) \cap Y_{n-1} \subseteq D(Y_{n-1},z)$ by the induction hypothesis.
	Therefore, $D(Y_{n-1},y)=D(Y_{n-1},z)=D(Y_{n-1},z')=D(Y_{n-1},y')$.
	We have $|e-e'| =|z-z'|<2(n-1)\varepsilon_N/N$ by the above equality and clause (b).
	Clause (2) implies that $D(X_n,d)=D(X_n,d')$.
	We have shown clause (a).
	Clause (b) is obvious from clause (a) and the inequality $l(X_n,d)<\varepsilon_N/N$.
	
	Finally, we prove clause (c).
	Let $x' \in (x-\varepsilon_n,x+\varepsilon_n) \cap Y_n$.
	Choose $y,y' \in Y_{n-1}$ and $d,d' \in X_n$ so that $x=y+d$ and $x'=y'+d'$.
	We have $|y-y'| \leq |x-x'|+|d'-d|<\varepsilon_n+\varepsilon_{n-1}/3<5\varepsilon_{n-1}/6$ by clause (1).
	By the induction hypothesis, we have $D(Y_{n-1},y)=D(Y_{n-1},y')$.
	Next we have $|d-d'| \leq |x-x'|+|y-y'|<\varepsilon_n+n\varepsilon_N/N<2\varepsilon_n$.
	By clause (2), we have $D(X_n,d)=D(X_n,d')$.
	We get $x' \in D(Y_{n-1},y')+D(X_n,d')=D(Y_{n-1},y)+D(X_n,d)=D(Y_n,x)$.
		
	Now, for $1 \leq n < N$, let $\phi_n(x;y,z)$ be the following formula:
	\begin{align*}
		\phi_n(x;y,z) &:=\mlq \exists w (w \in Y_n \wedge w< x \wedge y<x-w<z) \mrq.
	\end{align*}
	We show that $\phi_n(x;l_i^{n+1},r_i^{n+1}) \wedge \phi_n(x;l_k^{n+1},r_k^{n+1})$ is inconsistent if $i \neq k$.
	Suppose for contradiction that there exists $a \in R$ with $\mathcal R \models \phi_n(a;l_i^{n+1},r_i^{n+1}) \wedge \phi_n(a;l_k^{n+1},r_k^{n+1})$.
	We find $y_i,y_k \in Y_n$ such that $y_i<a$, $y_k<a$, $l_i^{n+1}<a-y_i<r_i^{n+1}$ and $l_k^{n+1}<a-y_k<r_k^{n+1}$.
	If $D(Y_n,y_i)=D(Y_n,y_k)$, we have $a-y_i \in I_i^{n+1}$, $a-y_k \in I_k^{n+1}$ and $|(a-y_i)-(a-y_k)|=|y_i-y_k|< n\varepsilon_N/N<\varepsilon_n$.
	This contradicts clauses (ii) and (iii).
	If $D(Y_n,y_i) \neq D(Y_n,y_i)$, we may assume that $y_i<y_k$ without loss of generality.
	We have $a-y_i=a-y_k+(y_k-y_i) >\varepsilon_n$ by clause (c).
	We have $a-y_i \notin I_i^{n+1}$ because $I_i^{n+1} \subseteq (0,\varepsilon_n/3)$ by clauses (i) and (ii).
	This contradicts $ l_i^{n+1}<a-y_i<r_i^{n+1}$.
	
	Now let $\eta:\{1,2,\ldots,N-1\} \to \mathbb N$.
	We show the partial type $$\Gamma (x) :=\bigwedge_{n=1}^{N-1} \phi_n(x;l_{\eta(n)}^{n+1},r_{\eta(n)}^{n+1})$$ is consistent.
	This deduces that $\operatorname{Th}(\mathcal R)$ is of burden $\geq N-1$.
	Choose $y_1 \in Y_1=X_1$.
	Put $J_1=(y_1+l_{\eta(1)}^2, y_1+r_{\eta(1)}^2)$.
	Choose $d_1 \in X_2 \cap I_{\eta(1)}^2$ so that $D(X_2 \cap I_{\eta(1)}^2, d_1)$ is neither the leftmost and rightmost maximal convex subsets of $X_2 \cap I_{\eta(1)}^2$. 
	It is possible by clause (iv).
	Put $y_2=y_1+d_1$ and $J_2:=(y_2+l_{\eta(2)}^3, y_2+r_{\eta(2)}^3)$.
	We can find $e_{l1},e_{r1} \in X_2 \cap I_{\eta(1)}^2$ such that $e_{l1}<d_1<e_{r1}$, $D(X_2 \cap I_{\eta(1)}^2, e_{l1}) \neq D(X_2 \cap I_{\eta(1)}^2, d_1)$ and $D(X_2 \cap I_{\eta(1)}^2, e_{r1}) \neq D(X_2 \cap I_{\eta(1)}^2, d_1)$.
	We have $(y_1+e_{l1},y_1+e_{r1}) \subseteq J_1$.
	Observe that $l_{\eta(2)}^3<\varepsilon_2/3$ and $r_{\eta(2)}^3 \leq \varepsilon_2/3$ by clauses (i) and (ii).
	Clause (2) implies that $d_1-e_{l1} \geq 2\varepsilon_2$ and $e_{r1}-d_1 \geq 2\varepsilon_2$.
	These imply that $J_2 \subseteq (y_1+e_{l1},y_1+e_{r1})$.
	We have proved $J_2 \subseteq J_1$.
	
	We construct $J_3, \ldots, J_{N-1}$ in the same manner.
	It is easy to check that every element in $J_n$ satisfies $\phi_n(x;l_{\eta(n)}^{n+1},r_{\eta(n)}^{n+1})$.
	Every element in $J_{N-1}$ satisfies $\Gamma(x)$.
\end{proof}

\begin{lemma}\label{lem:key_extended2}
	Suppose $\mathcal R$ is n.v.
	Let $N \geq 2$ be a natural number and $\alpha \in \overline{R}$.
	Let $\{X_n\}_{n=1}^N$ be a family of definable subsets of $R$ having infinitely many maximal convex subsets.
	Let $\{\varepsilon_n\}_{n=1}^N$ be a decreasing family of positive elements in $R$.
	Suppose the following condition is satisfied:
	\begin{enumerate}
		\item[(1)] $X_{n+1} \subseteq (\alpha,\alpha+\varepsilon_n/3)$,
		\item[(2)] If $x \in X_n$, then $(x-2\varepsilon_n,x+2\varepsilon_n) \cap X_n \subseteq D(X_n,x)$.
	\end{enumerate}
	Then $\operatorname{Th}(\mathcal R)$ is of burden $\geq N-1$.
\end{lemma}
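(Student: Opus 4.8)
The plan is to reduce Lemma \ref{lem:key_extended2} to Lemma \ref{lem:key_extended} by a translation argument, exactly as Lemma \ref{lem:DM_minor_ext} and Corollary \ref{cor:accumulation} handle the gap-valued case by reducing to their $R$-valued counterparts.

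First suppose $\alpha \in R$. Then I would replace each $X_n$ by its translate $X_n-\alpha$. Since translation by $-\alpha$ is a definable order isomorphism of $R$, it carries maximal convex subsets to maximal convex subsets, preserves the property of having infinitely many of them, and transforms condition (2) for $X_n$ into the identical condition for $X_n-\alpha$; moreover clause (1) becomes $X_{n+1}-\alpha\subseteq(0,\varepsilon_n/3)$. Hence $\{X_n-\alpha\}_{n=1}^N$ together with $\{\varepsilon_n\}_{n=1}^N$ satisfies the hypotheses of Lemma \ref{lem:key_extended}, which gives that $\operatorname{Th}(\mathcal R)$ is of burden $\geq N-1$.

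Now suppose $\alpha=(A,B)$ is a definable gap; as $\mathcal R$ is n.v., $\alpha$ is n.v. For each $n$ with $2\leq n\leq N$, using that $X_n$ has infinitely many maximal convex subsets and that $\mathcal R$ is $\kappa$-saturated, I would pick $t_n\in R$ such that both $X_n\cap(-\infty,t_n)$ and $X_n\cap(t_n,\infty)$ have infinitely many maximal convex subsets; this is possible because ``$X_n\cap(-\infty,r)$ and $X_n\cap(r,\infty)$ each have at least $k$ maximal convex subsets'' is first-order in $r$ by the discussion preceding Lemma \ref{lem:key_extended}, and the family of these formulas over $k\in\mathbb N$ is finitely satisfiable. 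Put $X_1':=X_1$ and, for $2\leq n\leq N$, $X_n':=\{x\in R\;|\;x>0,\ x+t_n\in X_n\}=(X_n\cap(t_n,\infty))-t_n$. Then each $X_n'$ has infinitely many maximal convex subsets by the choice of $t_n$, and passing from $X_n$ to its trace on $(t_n,\infty)$ and then translating preserves clause (2) with the same constant $2\varepsilon_n$, since the trace on an interval of a maximal convex subset of $X_n$ is convex and hence lies inside a single maximal convex subset of the restricted set. Because $X_n\subseteq(\alpha,\alpha+\varepsilon_{n-1}/3)$ for $2\leq n\leq N$ and $X_n$ has points on either side of $t_n$, we get $\alpha<t_n<\alpha+\varepsilon_{n-1}/3$; consequently, for every $e\in X_n'$ we have $0<e$ and $e+t_n<\alpha+\varepsilon_{n-1}/3$ while $t_n>\alpha$, which forces $e<\varepsilon_{n-1}/3$, so $X_n'\subseteq(0,\varepsilon_{n-1}/3)$. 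Now $\{X_n'\}_{n=1}^N$ and $\{\varepsilon_n\}_{n=1}^N$ satisfy the hypotheses of Lemma \ref{lem:key_extended}, and we conclude as before.

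All the computations are routine; the one point that needs care is checking that intersecting $X_n$ with the half-line $(t_n,\infty)$ does not destroy clause (2), which is precisely the remark that the trace of a maximal convex set on an interval is contained in a single maximal convex set of the restricted family. The nonvaluationality of $\mathcal R$ enters only through the appeal to Lemma \ref{lem:key_extended}.
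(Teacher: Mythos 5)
Your proposal is correct and follows essentially the same route as the paper: the paper's proof of this lemma simply says to repeat the argument of Lemma \ref{lem:DM_minor_ext}, replacing the appeal to Dolich--Goodrick's lemma by Lemma \ref{lem:key_extended} and counting maximal convex subsets instead of points, which is exactly the translation-by-$t_n$ argument you carry out (and you correctly flag the one point the paper leaves implicit, namely that clause (2) survives restriction to the half-line $(t_n,\infty)$).
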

\begin{proof}
	The proof of this lemma is almost the same as that of Lemma \ref{lem:DM_minor_ext}.
	We use Lemma \ref{lem:key_extended} instead of \cite[Lemma 2.10]{DG} and count the number of maximal convex subsets in place of counting the number of points.
%	
%	The following proof is given just for check.
%	We remove it when this paper is submitted.
%	First, if $\alpha \in R$, then this lemma is identical to Lemma \ref{lem:key_extended}.
%	Therefore, we may assume that $\alpha$ is a definable gap $(A,B)$.
%	For $1 \leq i \leq N$, we construct definable sets $Z_i$ having infinitely many maximal convex subsets and $t_i \in R$ such that $Z_i \subseteq (0,\varepsilon_i/3)$ and $t_i+Z_i \subseteq X_i$.
%	
%	For $1 \leq j \leq N$ and $k \in \mathbb N$, consider the formula $$\phi_{j,k}(r):= \mlq \#_{\text{convex}}((-\infty,r) \cap X_j) \geq k \wedge \#_{\text{convex}}((r,\infty) \cap X_j) \geq k\mrq.$$
%	The family of formulas $\{\phi_{j,k}\}_{k \in \mathbb N}$ is finitely satisfiable.
%	Since $\mathcal R$ is $\kappa$-saturated, we can find $t_j \in R$ with $\mathcal R \models \bigwedge_{k \in \mathbb N}\phi_{j,k}(t_j)$.
%	Put $Z_j:=\{x \in R\;|\; x>0,\ x+t_j \in X_j\}$.
%	The inclusion $t_j+Z_j \subseteq X_j$ is obvious.
%	
%	Let $e \in Z_j$.
%	It is obvious $e>0$.
%	By clause (1), we have $t_j>\alpha$ and $e+t_j-\alpha<\epsilon_{j-1}/3$.
%	Therefore, $e<\varepsilon_{j-1}/3$. 
%	We have showed $Z_j \subseteq (0,\varepsilon_j/3)$.
%	
%	Now, apply 	Lemma \ref{lem:key_extended2} to $Z_j$'s, then we obtain this lemma.	
\end{proof}

\begin{corollary}\label{cor:accumulation2}
	Suppose $\mathcal R$ is n.v.\ and $\operatorname{Th}(\mathcal R)$ is of finite burden.
	Let $U$ be a definable open subset of $R$ having infinitely many maximal convex subsets.
	Then, for every $a \in \overline{R}$, there exists an open interval $I$ with $a \in \overline{I}$ such that $U \cap I$ is a union of finitely many open convex set.
\end{corollary}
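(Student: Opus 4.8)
The plan is to argue by contradiction and reduce to Lemma \ref{lem:key_extended2}, in direct analogy with the proof of Corollary \ref{cor:accumulation} (which reduces to Lemma \ref{lem:DM_minor_ext}). Suppose there is $a \in \overline R$ such that $U \cap I$ has infinitely many maximal convex subsets for every open interval $I$ with $a \in \overline I$. Fix an arbitrary $N \geq 2$; I will produce definable sets $X_1,\dots,X_N$ and a decreasing sequence $\varepsilon_1 > \dots > \varepsilon_N > 0$ satisfying the hypotheses of Lemma \ref{lem:key_extended2}, so that $\operatorname{Th}(\mathcal R)$ has burden $\geq N-1$; as $N$ is arbitrary, this contradicts finite burden. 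Splitting $U$ into its parts above and below $a$ exactly as in the proof of Corollary \ref{cor:accumulation}, we may assume $a < U$; put $\alpha = a$, which is nonvaluational since $\mathcal R$ is.

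First I set up the induction. I maintain a decreasing chain $U = V_0 \supseteq V_1 \supseteq \dots$ of definable sets, each $V_n$ being $U$ intersected with an open interval whose closure contains $\alpha$; since $V_n \subseteq U \subseteq (\alpha,\infty)$, the contradiction hypothesis guarantees that each $V_n$ still has infinitely many maximal convex subsets. At step $n$, I use $\kappa$-saturation to choose $\varepsilon_n > 0$ (smaller than $\varepsilon_{n-1}$) and set $X_n := \{x \in V_{n-1} : (x-2\varepsilon_n,\,x+2\varepsilon_n) \cap V_{n-1} \subseteq D(V_{n-1},x)\}$ — the union of those maximal convex subsets of $V_{n-1}$ lying at distance more than $2\varepsilon_n$ from the rest of $V_{n-1}$ — with $\varepsilon_n$ chosen so that $X_n$ still has infinitely many maximal convex subsets; I then pick, using that $\alpha$ is nonvaluational, an open interval $I_n = (c_n,d_n)$ with $c_n < \alpha < d_n < \alpha + \varepsilon_n/3$ and put $V_n := V_{n-1} \cap I_n$. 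It is then routine to verify $X_{n+1} \subseteq V_n \subseteq (\alpha,\alpha+\varepsilon_n/3)$ and $D(X_n,x) = D(V_{n-1},x)$ for $x \in X_n$, whence $\{X_n\}$ and $\{\varepsilon_n\}$ satisfy clauses (1) and (2) of Lemma \ref{lem:key_extended2}, and the lemma applies.

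The crucial point — and the one I expect to be the main obstacle — is that at each step such an $\varepsilon_n$ exists. By $\kappa$-saturation it is enough that for every $M \in \mathbb N$ there is an $\varepsilon > 0$ for which the thinning of $V_{n-1}$ at scale $\varepsilon$ has at least $M$ maximal convex subsets. If this fails there is a uniform bound $M_0$; as the thinnings grow as $\varepsilon$ decreases, the definable set of $x \in V_{n-1}$ whose maximal convex subset lies at positive distance from the rest of $V_{n-1}$ then has fewer than $M_0$ maximal convex subsets, and removing these leaves a definable open $V'$ with infinitely many maximal convex subsets near $\alpha$, none at positive distance from the rest of $V'$. Thus every maximal convex subset of $V'$ is either immediately adjacent to a neighbour or is accumulated on by infinitely many others. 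In the adjacency case the common endpoint of two adjacent maximal convex subsets must lie in $R$ — an adjacency across a gap would force the two pieces to be one convex set, which is where nonvaluationality of $\mathcal R$ is used — and these endpoints form an infinite discrete definable set accumulating at $\alpha$; feeding this set into the construction above (all of its maximal convex subsets being singletons, so the thinning step is immediate) and applying Lemma \ref{lem:key_extended2} again forces burden $\geq N-1$, a contradiction. The accumulation case is handled by iterating the dichotomy: either the iteration terminates in a discrete set, handled as just described, or it produces an $N$-fold nested accumulation of maximal convex subsets, which yields an inp-pattern of depth $N-1$ by an argument paralleling the proof of Lemma \ref{lem:key_extended2}. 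The delicate bookkeeping in these degenerate cases — in particular tracking whether the endpoints of maximal convex subsets lie in $R$ or only in $\overline R$ — is where most of the effort goes; the rest is a faithful adaptation of the proof of Corollary \ref{cor:accumulation}.
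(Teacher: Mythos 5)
Your overall strategy (argue by contradiction, build sets $X_n$ and scales $\varepsilon_n$ by saturation, and feed them into Lemma \ref{lem:key_extended2}) is the paper's strategy, but the step you yourself flag as ``the main obstacle'' --- the existence of $\varepsilon_n$ --- is exactly where your argument breaks down, and your attempted repair is both unnecessary and incomplete. The point you miss is that $U$ is \emph{open}, so every maximal convex subset of $U$ (and of each $V_{n-1}$) is an open convex set and therefore has positive length $l(U,x)>0$. This makes the finite satisfiability trivial: given $M$, pick $M$ distinct maximal convex subsets, each of positive length, and take $r$ smaller than a quarter of the minimum of these lengths; then $U(r):=\{x\in U\;|\;l(U,x)>4r\}$ already has at least $M$ maximal convex subsets. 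Saturation then yields a single $\varepsilon_1$ with $U(\varepsilon_1)$ having infinitely many maximal convex subsets, and the required $2\varepsilon_1$-separation is obtained not by selecting ``isolated'' components (which may not exist, e.g.\ when consecutive components share endpoints) but by \emph{shrinking} each surviving component: $V_1:=\{x\in U(\varepsilon_1)\;|\;(x-2\varepsilon_1,x+2\varepsilon_1)\subseteq U(\varepsilon_1)\}$. Each component of length $>4\varepsilon_1$ contributes a nonempty convex piece to $V_1$, and clause (2) of Lemma \ref{lem:key_extended2} holds by construction. Nonvaluationality is used only to choose the interval $I<\alpha+\varepsilon_n/3$ with $\alpha\in\overline{I}$, as you do.

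Because you thin by separation rather than by length, you are forced into the adjacency/accumulation dichotomy, and that fallback is not a proof. Concretely: (i) the claim that if every scale-$\varepsilon$ thinning has fewer than $M_0$ components then the union over all $\varepsilon>0$ of these thinnings has fewer than $M_0$ components does not follow from monotonicity alone (components of an increasing union need not be increasing unions of components without a further compactness argument); (ii) the ``accumulation case'' --- precisely the Cantor-like configurations of components, which is where all the difficulty of the corollary lives (compare Corollary \ref{cor:Cantor-like}, which the paper \emph{deduces from} this corollary) --- is dispatched with ``iterating the dichotomy'' and ``an argument paralleling the proof of Lemma \ref{lem:key_extended2}'', i.e.\ it is not carried out, and there is no guarantee the iteration terminates or produces the claimed inp-pattern. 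Replace the entire second half of your argument with the length-thinning-plus-shrinking construction above and the proof closes in a few lines.
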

\begin{proof}
	Assume for contradiction that $U \cap I$ has infinitely many maximal convex subsets for arbitrary open interval $I$ with $a \in \overline{I}$.
	Put $U(r):=\{x \in U\;|\; l(U,x) > 4r\}$ for $r>0$.
	Let $\psi_N(r)=\mlq r>0 \wedge \#_{\text{convex}}U(r) \geq N \mrq$ for $N \in \mathbb N$.
	We can easily verify $\mathcal R \models \exists r\ \psi_N(r)$.
	By saturation, there exists $\varepsilon_1>0$ such that $\mathcal R \models \bigwedge_{N \in \mathbb N}\psi_N(\varepsilon_1)$.
	Put $U_1:=U(\varepsilon_1)$. 
	$U_1$ has infinitely many maximal convex subsets.
	Set $V_1:=\{x \in U_1\;|\; (x-2\varepsilon_1,x+2\varepsilon_1) \subseteq U_1\}$.
	By the definition of $U_1$, for every maximal convex subset $C$ of $U_1$, $C \cap V_1$ is nonempty and convex.
	This $V_1$ satisfies clause (2) of Lemma \ref{lem:key_extended2}.
	Since $\alpha$ is n.v, we can find an open interval $I$ with $\alpha \in \overline{I}$ and $I<\alpha+\varepsilon_1/3$.
	Considering $U \cap I$ instead of $U$, we may assume that $U \subseteq (\alpha,\alpha+\varepsilon_1/3)$.
	We inductively define $V_2, V_3, \ldots$ in the same manner as above.
	We have constructed a family of infinitely many definable subsets $V_i$ of $R$ satisfying the conditions in Lemma \ref{lem:key_extended2}.
	This contradicts the assumption that $\operatorname{Th}(\mathcal R)$ is of finite burden.
\end{proof}

\begin{corollary}\label{cor:Cantor-like}
Let $\mathcal R$ be as in Corollary \ref{cor:accumulation2}.
$\mathcal R$ does not define a nowhere dense subset of $R$ having no isolated points.
\end{corollary}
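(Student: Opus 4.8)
The plan is to argue by contradiction. Suppose $\mathcal R$ defines a nonempty subset $X \subseteq R$ that is nowhere dense and has no isolated points. First I would reduce to the case that $X$ is closed. The closure $\mycl(X)$ is definable, since $\mycl(X)=\{x \in R\;|\;\forall \varepsilon>0\ \exists y \in X\ |x-y|<\varepsilon\}$; it is again nowhere dense, and it still has no isolated points: if $p$ were isolated in $\mycl(X)$, a small open neighborhood $V$ of $p$ would satisfy $V \cap \mycl(X)=\{p\}$, hence $V \cap X \subseteq \{p\}$, forcing either $p$ to be isolated in $X$ or $p \notin \mycl(X)$, both impossible. Replacing $X$ by $\mycl(X)$ we may thus assume $X$ is closed; in particular $X$ is a nonempty perfect set, hence infinite, since a finite set has isolated points.

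Next I would consider $U:=R \setminus X$, a definable open subset of $R$, and claim that $U$ has infinitely many maximal convex subsets. Indeed, if $U$ had only finitely many, say $U=I_1 \cup \cdots \cup I_k$ with each $I_j$ convex, then $X=R\setminus U$ would be a finite union of convex subsets of $R$ (the complement of finitely many convex sets inside the convex set $R$ is again a finite union of convex sets). Each such convex piece $C \subseteq X$ has empty interior because $X$ is nowhere dense, and a convex set with more than one point in the densely ordered group $R$ contains a nonempty open interval; hence each $C$ is a singleton, so $X$ would be finite, contradicting the previous paragraph. Therefore $U$ satisfies the hypotheses of Corollary \ref{cor:accumulation2}.

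Finally I would fix a point $a \in X$ (so $a$ is an n.v.\ element of $\overline{R}$, being in $R$) and apply Corollary \ref{cor:accumulation2}: there is an open interval $I$ with $a \in \overline{I}$, i.e.\ $a \in I$, such that $U \cap I=J_1 \cup \cdots \cup J_m$ for finitely many open convex sets $J_i$. Then $X \cap I=I \setminus (J_1 \cup \cdots \cup J_m)$ is a finite union of convex subsets of $I$, each of empty interior by nowhere density, hence each a singleton; so $X \cap I$ is finite. But $a \in X \cap I$ and $a$ is not isolated in $X$, so setting $\delta=\min\{|a-x|\;|\;x \in (X\cap I)\setminus\{a\}\}>0$ and choosing a small interval around $a$ inside $I$ of radius less than $\delta$ yields a neighborhood of $a$ meeting $X$ only in $a$, contradicting that $a$ is not isolated.

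Since the genuinely hard combinatorics are already packaged in Lemma \ref{lem:key_extended} and its corollaries, I do not expect a real obstacle here; the only points needing a little care are the reduction to closed $X$ (checking that passing to the closure preserves having no isolated points) and the verification that the complement $U$ actually has infinitely many maximal convex subsets, rather than merely being dense.
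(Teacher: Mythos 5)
Your proof is correct and follows essentially the same route as the paper: pass to the closure of $X$, observe that its complement is a definable open set with infinitely many maximal convex subsets, and derive a contradiction with Corollary \ref{cor:accumulation2} at a point $a \in X$. The paper's version is terser (it notes directly that $I \setminus \mycl(X)$ has infinitely many maximal convex subsets for every interval $I$ containing $a$), but the substance is identical.
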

\begin{proof}
	Suppose for contradiction that there exists a definable nowhere dense subset $X$ of $R$ having no isolated points.
	Take $a \in X$.
	Observe that $a$ is not isolated in the closure $\mycl(X)$.
	For every open interval $I$ containing $a$, $I \setminus \mycl(X)$ is a definable open subset having infinitely many maximal convex subsets.
	This contradicts to Corollary \ref{cor:accumulation2}.
\end{proof}

We are now ready to prove our main theorem.
\begin{theorem}\label{thm:locally_weakly_omin}
	Suppose $\mathcal R$ is nonvaluational and $\operatorname{Th}(\mathcal R)$ is of finite burden.
	We further assume that $\mathcal R$ defines no nonempty subset $X$ of $R$ which is dense and codense in a definable open subset $U$ of $R$ with $X \subseteq U$.
	Then, $\mathcal R$ is $*$-locally weakly o-minimal.
\end{theorem}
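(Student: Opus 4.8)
The plan is to reduce the statement to Corollary~\ref{cor:accumulation2}, by splitting an arbitrary definable $X\subseteq R$ into its interior and a ``thin'' remainder, and to extract local finiteness of the remainder by a complementation trick.

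First I would record the following reformulation: a definable set $S\subseteq R$ is a union of a finite set and finitely many open convex sets if and only if $S$ has only finitely many maximal convex subsets. Hence it suffices to show that for every definable $X\subseteq R$ and every $a\in\overline R$ there is an open interval $I$ with $a\in\overline I$ such that $X\cap I$ has only finitely many maximal convex subsets. Since $\mathcal R$ is nonvaluational of finite burden, Corollary~\ref{cor:accumulation2} is available. (We argue, as throughout this section, in a $\kappa$-saturated model; this is harmless, since nonvaluationality and finite burden are first-order, the failure of a fixed formula $\varphi$ to define a wild set is a single first-order sentence, and $*$-local weak o-minimality is elementary by \cite{Fuji_weak}.)

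Now I would carry out the decomposition. Put $U:=\myint(X)$ and $Y:=X\setminus U$; both are definable, and $X\cap I=(U\cap I)\cup(Y\cap I)$ for every open interval $I$. For the open part: $U$ is a definable open set, so either $U$ has only finitely many maximal convex subsets---in which case $U$, hence $U\cap I$ for every $I$, is already a finite union of open convex sets---or Corollary~\ref{cor:accumulation2} provides an open interval $I_1$ with $a\in\overline{I_1}$ on which $U$ is such a finite union. For the thin part, the key point is that $Y$ is nowhere dense: from $\myint(Y)\subseteq\myint(X)=U$ and $Y\cap U=\emptyset$ we get $\myint(Y)=\emptyset$, so if $Y$ were dense in some open interval $J$ then $Y\cap J$ would be a nonempty definable subset of $J$ that is dense and codense in $J$, contradicting the hypothesis of the theorem. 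Therefore $\mycl(Y)$ is a definable closed nowhere dense set, $R\setminus\mycl(Y)$ is definable and open, and Corollary~\ref{cor:accumulation2} (applied to $R\setminus\mycl(Y)$, or trivially if that set has finitely many maximal convex subsets) yields an open interval $I_2$ with $a\in\overline{I_2}$ on which $R\setminus\mycl(Y)$ is a finite union of open convex sets. Then $\mycl(Y)\cap I_2$ is obtained from the interval $I_2$ by deleting finitely many open convex subsets, hence is a finite union of convex sets; each such convex set is contained in the nowhere dense set $\mycl(Y)$, hence has empty interior and is a single point or empty. Thus $\mycl(Y)\cap I_2$, and a fortiori $Y\cap I_2$, is finite. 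Taking $I:=I_1\cap I_2$---still an open interval with $a\in\overline I$---we conclude that $X\cap I=(U\cap I)\cup(Y\cap I)$ is a finite union of open convex sets together with a finite set, which by the reformulation completes the proof.

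As for difficulty: almost all of the real content is already packed into Corollary~\ref{cor:accumulation2} (and, behind it, the burden computations of Lemmas~\ref{lem:key_extended}--\ref{lem:key_extended2}), which is where nonvaluationality and finite burden enter. What remains here and still needs some care is the reduction to a saturated model and, chiefly, the use of the no-wild-set hypothesis to force the remainder $Y=X\setminus\myint(X)$ to be nowhere dense; once that is in hand, the complementation step---passing from ``$R\setminus\mycl(Y)$ is locally a finite union of open convex sets'' to ``$\mycl(Y)$ is locally finite''---is a short topological observation, and the rest is bookkeeping.
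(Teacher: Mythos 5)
Your proof is correct, but it handles the non-interior part of $X$ by a genuinely different route than the paper. Both proofs begin identically: set $U=\myint(X)$, apply Corollary~\ref{cor:accumulation2} to get $I_1$, and put $Y=X\setminus U$. The paper then asserts a definable partition $Y=Y_1\cup Y_2\cup Y_3$ into a dense-and-codense part, a nowhere dense part without isolated points, and a discrete part; it kills $Y_1$ by the no-wild-set hypothesis, kills $Y_2$ by Corollary~\ref{cor:Cantor-like}, and handles the discrete remainder $Y_3$ by Corollary~\ref{cor:accumulation} (which rests on Lemma~\ref{lem:DM_minor_ext}). You instead observe that $\myint(Y)=\emptyset$, so if $Y$ were dense in any open interval $J$ it would be dense and codense there, violating the hypothesis; hence $Y$ is nowhere dense, and applying Corollary~\ref{cor:accumulation2} to the definable open set $R\setminus\mycl(Y)$ and complementing shows $\mycl(Y)\cap I_2$ is a finite union of convex sets with empty interior, i.e.\ finite. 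Your version buys several things: it uses only Corollary~\ref{cor:accumulation2} (so Corollary~\ref{cor:accumulation}, Lemma~\ref{lem:DM_minor_ext} and Corollary~\ref{cor:Cantor-like} are not needed for the theorem), it avoids the paper's three-way partition of $Y$, whose definability is asserted but not justified, and it yields the slightly stronger local conclusion that $\mycl(Y)\cap I$ is finite. What the paper's route buys is the reusable intermediate statements about discrete and Cantor-like definable sets, the first of which holds under the weaker hypothesis that $\operatorname{Th}(\mathcal R)$ is strong. Your auxiliary claims (the equivalence between ``finitely many maximal convex subsets'' and ``finite set plus finitely many open convex sets'', and the fact that removing finitely many disjoint open convex sets from an interval leaves finitely many convex pieces) are routine in a dense order and correctly used, and your remarks on transferring the hypotheses and conclusion between $\mathcal R$ and a saturated elementary extension match the paper's closing remark.
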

\begin{proof}
	Let $a \in \overline{R}$ and $X$ be a definable subset of $R$.
	We have only to prove that there exists an open interval $I$ such that $a \in \overline{I}$ and $I \cap X$ is a union of finitely many convex sets.
	Set $U=\myint(X)$.
	By Corollary \ref{cor:accumulation2}, we find an open interval $I_1$ such that $a \in \overline{I_1}$ and $U \cap I_1$ is a union of finitely many open convex sets.
	Put $Y=X \setminus U$.
	Then $Y$ is partitioned as $Y=Y_1 \cup Y_2 \cup Y_3$ satisfying the following conditions:
	\begin{itemize}
		\item $Y_1$, $Y_2$ and $Y_3$ are definable;
		\item $Y_1$ is either empty or has a definable open subset $V$ of $R$ such that $Y_1 \subseteq V$ and $Y_1$ is dense and codense in $V$;
		\item $Y_2$ is either empty or a nowhere dense subset of $R$ having no isolated points;
		\item $Y_3$ is either empty or discrete.
	\end{itemize}
	$Y_1=\emptyset$ by the assumption of this theorem and $Y_2=\emptyset$ by Corollary \ref{cor:Cantor-like}.
	Therefore, $Y$ is discrete and, by Corollary \ref{cor:accumulation}, there exists an open interval $I_2$ such that $a \in \overline{I_2}$ and $Y \cap I_2$ is a finite set.
	Put $I=I_1 \cap I_2$.
	We have $a \in \overline{I}$ and $I \cap X$ is a union of finitely many convex sets.
\end{proof}

\begin{remark}
	Being n.v.\ is a first-order property.
	By \cite[Proposition 2.16]{Fuji_weak}, the  conclusion of Theorem \ref{thm:locally_weakly_omin} is preserved by elementary equivalence.
	Therefore, Theorem \ref{thm:locally_weakly_omin} holds without the assumption of saturation of $\mathcal R$. 
\end{remark}

\begin{example}
	We cannot drop the assumption that $\mathcal R$ is n.v.\  in Theorem \ref{thm:locally_weakly_omin}.
	A \textit{standard simple product} of two structures is defined in \cite[Definition 2.5]{Fuji_product} and \cite[Definition 3.52]{DG3}.
	We do not repeat the definition of standard simple products here.
	Let $\mathcal M_1=\mathcal M_2=(\mathbb R,<,+,0,\mathbb Z)$ and $\mathcal R$ be the standard simple product of $\mathcal M_1$ and $\mathcal M_2$.
	By \cite{DG}, $(\mathbb R,<,+,0,\mathbb Z)$ has dp-rank $2$.
	$\mathcal R$ is of finite dp-rank by \cite[Proposition 3.53]{DG3}.
	Recall that dp-rank of an NIP theory $T$ (cf.\ \cite{Simon_NIP}) is equal to its burden by \cite[Proposition 1.10]{Adler}.
	The order in $R=\mathbb R \times \mathbb R$ is the lexicographic order in $\mathbb R \times \mathbb R$. 
	$A:=\{(x,y) \in \mathbb R \times \mathbb R=R\;|\; x \leq 0\}$ and $\mathbb R \times \mathbb Z$ are definable in $\mathcal R$.
	The pair $(A, R \setminus A)$ defines a valuational definable gap $\alpha$.
	For every open interval $I$ in $R$ with $\alpha \in \overline{I}$, $I \cap (\mathbb R \times \mathbb Z)$ is an infinite discrete set.
	This example violates Theorem \ref{thm:locally_weakly_omin}.
\end{example}

\section{Burden two case}\label{sec:burden_two}

In this section, we consider the case in which $\mathcal R$ is definably complete and of burden two.
Recall that an expansion of a dense linear order possibly with endpoints is \textit{o-minimal} if every unary definable set is a union of a finite set and finitely many open intervals \cite{vdD}. 

\begin{theorem}\label{thm:local_omin_case}
	Suppose $\mathcal R$ is definably complete.
	$\mathcal R$ is of burden two and defines an infinite discrete subset of $R$ if and only if there exist $0<c \in R$, a definable infinite discrete subgroup $G$ of $R$, a definably complete inp-minimal expansion $\mathcal G$ of $(G,<,+)$ and an o-minimal structure $\mathcal I$ whose universe is $I:=[0,c)$ such that 
	\begin{itemize}
		\item $c \in G$ and $g \geq c$ for every $0<g \in G$;
		\item $(R,<,+,G) \equiv (\mathbb R,<,+,\mathbb Z)$;
		\item $\mathcal R$ is interdefinable with the standard simple product of $\mathcal G$ and $\mathcal I$ under the identification $G \times I \ni (g,t) \mapsto g+t \in R$. 
	\end{itemize}
\end{theorem}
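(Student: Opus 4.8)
\emph{Backward implication.} I treat the two implications separately, the forward one being the substantial one; the easy direction is as follows. Suppose $c$, $G$, $\mathcal G$, $\mathcal I$ as in the statement are given, and identify $R$ with $G\times I$ via $(g,t)\mapsto g+t$. Then $G$ is $\mathcal R$-definable, infinite (a nontrivial ordered abelian group is infinite) and discrete, so $\mathcal R$ defines an infinite discrete set. As for the burden, $(R,<,+,G)\equiv(\mathbb R,<,+,\mathbb Z)$ is a reduct of $\mathcal R$ and $\operatorname{bdn}(\mathbb R,<,+,\mathbb Z)=2$, whence $\operatorname{bdn}(\mathcal R)\ge 2$; on the other hand $\mathcal G$ is inp-minimal and $\mathcal I$, being o-minimal, has burden $1$, so the bound on the burden of a standard simple product of finite-burden structures (cf.\ \cite[Proposition 3.53]{DG3}, \cite{Fuji_product}) gives $\operatorname{bdn}(\mathcal R)\le 2$. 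Hence $\operatorname{bdn}(\mathcal R)=2$.

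\emph{Forward implication, Step 1: the subgroup and the decomposition.} Assume $\mathcal R$ is definably complete, of burden two, and defines an infinite discrete set. The analysis of definably complete expansions of ordered groups of burden two that define an infinite discrete closed set \cite{DG2,DG3} produces a definable infinite discrete subgroup $G$ of $R$. Since $G\cap R_{>0}$ is definable, discrete and bounded below, definable completeness gives it a least element $c$, so $c\in G$ and $g\ge c$ for all $0<g\in G$; consequently consecutive elements of $G$ differ by exactly $c$, and $G$ is cofinal and coinitial in $R$ because a subgroup bounded on one side is trivial. As $R$ is divisible, each $x\in R$ is uniquely $x=g+t$ with $g=\max(G\cap(-\infty,x])\in G$ and $t\in I:=[0,c)$, and $g$ is $\mathcal R$-definable, so $\rho\colon G\times I\to R$, $\rho(g,t)=g+t$, is an $\mathcal R$-definable bijection; write $\rho^{-1}(x)=(\rho_1(x),\rho_2(x))$.

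\emph{Forward implication, Step 2: the factors.} Let $\mathcal G$ and $\mathcal I$ be the structures induced on $G$ and on $I$ by $\mathcal R$; both are $\mathcal R$-definable, $\mathcal G$ expands the discretely ordered group $(G,<,+)$, and $\mathcal I$ expands the dense linear order $(I,<)$. The structure $\mathcal G$ is definably complete, since a definable $A\subseteq G$ bounded above has $\sup A\in R$ and, the elements of $G$ being $c$-separated, $\sup A\in A$. It is also inp-minimal: an inp-pattern of depth two in $\mathcal G$, pulled back to $\mathcal R$ through $\rho_1$ and enlarged by a third row consisting of the $2$-inconsistent family $\{\rho_2(x)=b_i\}_{i<\omega}$ for pairwise distinct $b_i\in I$ — whose columns are compatible with those of the pulled-back pattern, since $g+b_i$ realizes any prescribed data — would be an inp-pattern of depth three in $\mathcal R$, contradicting $\operatorname{bdn}(\mathcal R)=2$. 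An inp-minimal discretely ordered abelian group is a $\mathbb Z$-group, so $(G,<,+)$ is one; combined with $R$ divisible, $G$ cofinal, and $R/G$ divisible, this gives $(R,<,+,G)\equiv(\mathbb R,<,+,\mathbb Z)$. For the o-minimality of $\mathcal I$, I would first show that $\mathcal R$ defines no nonempty set dense and codense in a definable open subset of $R$: otherwise $\mathcal R$ would define such a ``wild'' set together with the infinite discrete closed set $G$, and combining the depth-two inp-pattern already carried by the reduct $(R,<,+,G)$ with a depth-one pattern supplied by the wild set — as in the computation giving $\operatorname{bdn}(\mathbb R,<,+,\mathbb Z,\mathbb Q)=3$ \cite[Proposition 3.1]{DG} — would force $\operatorname{bdn}(\mathcal R)\ge 3$. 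Since $\mathcal R$ is n.v.\ (a definably complete structure has no definable gap) and of finite burden, Theorem \ref{thm:locally_weakly_omin} then shows $\mathcal R$ is $*$-locally weakly o-minimal. Finally, for a definable $Y\subseteq I$, $*$-local weak o-minimality makes $Y$ a finite union of convex sets on a neighbourhood of each point of $\mycl(I)$; hence the frontier of $Y$ in $\mycl(I)$ is closed, discrete and bounded, so finite by definable completeness, and $Y$ is a finite union of convex subsets of $I$, which are intervals as $\mathcal R$ has no definable gap. Thus every unary definable subset of $I$ is a union of a finite set and finitely many open intervals, i.e.\ $\mathcal I$ is o-minimal.

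\emph{Forward implication, Step 3: interdefinability.} It remains to prove $\mathcal R$ is interdefinable with $\mathcal G\otimes\mathcal I$ under $\rho$. One inclusion is immediate, since $\mathcal G$, $\mathcal I$ and $\rho$ are $\mathcal R$-definable. For the converse, note that through $\rho^{-1}$ the order of $R$ becomes the lexicographic order on $G\times I$ and the addition of $R$ becomes the coordinatewise sum corrected by the $\mathcal I$-definable carry ``$t+t'\ge c$'', both present in $\mathcal G\otimes\mathcal I$; it then suffices to show every $\mathcal R$-definable $X\subseteq R^n$ is definable in $\mathcal G\otimes\mathcal I$. The heart is the case $n=1$: using the o-minimality of $\mathcal I$, the structure of the $\mathbb Z$-group $\mathcal G$, and — decisively — the burden-two hypothesis, which bounds the number of distinct fibres $\{t\in I:g+t\in X\}$ attained as $g$ ranges over $G$ (unboundedly many distinct fibres would produce a third inp-dimension over the two coming from $(R,<,+,G)$), one writes $X$ as a finite union of sets $A_j+B_j$ with $A_j\subseteq G$ definable in $\mathcal G$ and $B_j\subseteq I$ definable in $\mathcal I$; the higher-arity case follows by a cell decomposition over $G^n$. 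This last step — excluding any diagonal interaction between the two factors beyond the lexicographic order and the carry — is the one I expect to be the main obstacle, and it is where the burden-two hypothesis and a Feferman--Vaught-type analysis of standard simple products \cite{Fuji_product,DG3} are used in an essential way.
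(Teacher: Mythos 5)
Your skeleton matches the paper's: the easy direction via the burden of a standard simple product, and for the hard direction, extract the subgroup $G$ from the Dolich--Goodrick structure theory, set $c=\min(G\cap R_{>0})$ and $I=[0,c)$, show the induced structure $\mathcal I$ is o-minimal, and decompose every definable set into finitely many pieces of the form $A+B$. Your route to the o-minimality of $\mathcal I$ (exclude dense--codense sets, invoke Theorem \ref{thm:locally_weakly_omin}, then use definable completeness on $[0,c]$) differs from the paper's, which instead uses the additional output of \cite[Theorem 1.3]{DG3} that every $\mathcal R$-definable discrete set is already definable in the reduct $(R,<,+,G)$, and transfers uniform finiteness from $(\mathbb R,<,+,\mathbb Z)$ via \cite[Theorem 25]{KTTT}. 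Your alternative could be made to work, but as written the exclusion of wild sets is only asserted: ``combining the depth-two pattern from $(R,<,+,G)$ with a depth-one pattern from the wild set'' is not automatic --- the mutual genericity of the rows has to be arranged --- and the clean way to get this is to quote \cite[Corollary 2.10]{DG2} together with \cite[Lemma 2.3]{Fuji3} for local o-minimality of $\mathcal R$, after which Theorem \ref{thm:locally_weakly_omin} is not needed at all. (Also, ``a subgroup bounded on one side is trivial'' is false in a non-archimedean group; cofinality of $G$ needs definable completeness or the elementary equivalence with $(\mathbb R,<,+,\mathbb Z)$.)

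The genuine gap is Step 3. The statement you defer --- that the family of fibres $(S(X,k))_{k\in G^n}$ of subsets of $I^m$ takes only finitely many values --- is the actual content of the theorem (the paper's Claim 3), and the mechanism you propose for it is not the one that works. Infinitely many distinct fibres do not by themselves produce ``a third inp-dimension'': to build an inp-pattern you must exhibit a single formula with an $\omega$-sequence of parameters whose instances are $k$-inconsistent and whose row is mutually generic with the two rows coming from $(R,<,+,G)$, and nothing in the hypothesis hands you such a sequence. The paper proves finite type with no further burden computation: it first shows (Claim 2, via \cite[Proposition 2.8]{FKK} and the o-minimality of $\mathcal I$) that a uniformly definable family of finite subsets of $I^m$ has finite union, and then runs an induction on $m$, splitting each fibre into its interior part (encoded by finitely many boundary points and a $0$--$1$ pattern of intervals) and a part with empty-interior fibres, the latter controlled because all such fibres lie in a single set $V_x$ of uniformly bounded finite cardinality. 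Note also that the interdefinability for $n$-ary definable sets requires this finite-type statement for subsets of $I^m$ for every $m$, not only $m=1$; ``cell decomposition over $G^n$'' does not substitute for that induction. Until this finite-type argument is supplied, the proof is incomplete at its central point.
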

\begin{proof}
	The `only if' part of this theorem follows from \cite[Proposition 1.24]{Touchard} because o-minimal structures are dp-minimal.
	
	We prove the `if' part.	
	By \cite[Corollary 2.10]{DG2} and \cite[Lemma 2.3]{Fuji3}, $\mathcal R$ is locally o-minimal.
	By \cite[Theorem 1.3]{DG3}, we can also choose a definable discrete closed subgroup $G$ of $R$ such that $(R,<,+,G) \equiv (\mathbb R,<,+,\mathbb Z)$ and every discrete subset of $R$ definable in $\mathcal R$ is definable in $(R,<,+,G)$.
	
	Let $\mathcal R_{\text{lin}}:=(R,<,+,G)$.
	Set $c:=\inf\{g \in G\;|\; g>0\}$.
	Observe that $c \in G$ because $G$ is closed in $R$ by \cite[Proposition 2.8(1)]{FKK}.
	Put $I:=[0,c)$.
	Let $\mathcal I$ be the structure $I_{\mathcal R\text{-def}}$ on $I$ generated by every subset of $I^n$, $n>0$, definable in $\mathcal R$ (cf.\ \cite[Definition 2]{KTTT}).
	We define the structure $\mathcal G:=G_{\mathcal R\text{-def}}$ on $G$ in the same manner.
	We can easily show that $\mathcal G$ is definably complete because $G$ is closed in $R$.
	First we prove $\mathcal I$ is o-minimal.
	\medskip
	
	\textbf{Claim 1.} For every definable subset $X$ of $R$, $X \cap I$ is a union of a finite set and a finitely many open intervals.
	\begin{proof}[Proof of Claim 1]
		We may assume $X \subseteq I$ by considering $X \cap I$ instead of $X$.
		We can reduce to the case in which $X$ has a nonempty interior in the same manner as the proof of \cite[Corollary 4.4]{Fuji4}.
		We omit the details.
		We may assume that $X$ is discrete and closed by \cite[Lemma 2.3]{Fuji3}.
		
		Let $\mathcal L:=(<,+,G)$ be the language of $\mathcal R_{\text{lin}} $.
		$X$ is definable in $\mathcal R_{\text{lin}} $ because $X$ is discrete.
		Let $\phi(x;\overline{a})$ be an $\mathcal L$-formula with the parameters $\overline{a}$ defining the set $X$.
		Let $m$ be the arity of $\overline{a}$.
		%We may assume that $\{x \in R\;|\; \psi(x,\overline{b'})\}$ is a discrete group for every $\overline{b'} \in R^{m'}$ by modifying $\psi(x;\overline{a'})$ if necessary.
		By considering $$\forall t >0\  (t \in G \rightarrow 0 \leq x < t) \wedge \phi(x,\overline{a})$$	instead of $\phi(x,\overline{a})$, we may assume that $\{x \in R\;|\; \mathcal R_{\text{lin}} \models \phi(x,\overline{b})\}$ is contained in $I$ for every $\overline{b} \in R^m$.
		By \cite[Theorem 25]{KTTT} and the elementary equivalence $\mathcal R_{\text{lin}} \equiv (\mathbb R,<,+,\mathbb Z)$, there exist $M, N \in \mathbb N$ such that, for every $\overline{b} \in R^m$, $\{x \in R\;|\; \mathcal R_{\text{lin}} \models \phi(x,\overline{b})\}$ is a union of at most $M$  open intervals and at most $N$ points.
		In particular, $X$ is a finite set.
	\end{proof}
	
	Claim 1 implies that $\mathcal I$ is o-minimal.
	
	Next we prove that $\mathcal R$ is interdefinable with the standard simple product of $\mathcal G$ and $\mathcal I$.
	Let $(X(k))_{k\in G^n}$ be a uniform $\mathcal R$-definable family of subsets of $I^m$, that is, $\bigcup_{k \in G^n} \{k\} \times X(k)$ is definable in $\mathcal R$.
	We say that $(X(k))_{k \in G^n}$ is \textit{of finite type} if there are finitely many $\mathcal I$-definable subsets $Y_1,\ldots, Y_l$ of $I^m$ such that, for every $k \in G^n$, $X(k)=Y_i$ for some $1 \leq i \leq l$.
	\medskip
	
	\textbf{Claim 2.} Let $(X(k))_{k\in G^n}$ be a uniform $\mathcal R$-definable family of subsets of $I^m$.
	If $X(k)$ is a finite set for every $k \in G^n$, then $\bigcup_{k \in G^n}X(k)$ is a finite set.
	\begin{proof}[Proof of Claim 2]
		This claim follows from \cite[Proposition 2.8(1,6,11)]{FKK} and the fact $\mathcal I$ is o-minimal.
		We omit the details.
	\end{proof}

	\textbf{Claim 3.} Every uniform $\mathcal R$-definable family of subsets of $I^m$ is of finite type.
	\begin{proof}[Proof of Claim 3]
		Let $(X(k))_{k\in G^n}$ be a uniform $\mathcal R$-definable family of subsets of $I^m$.
		We show that $(X(k))_{k\in G^n}$ is of finite type.
		Let $\pi:R^m \to R^{m-1}$ be the coordinate projection forgetting the last coordinate.
		We consider that $R^0$ is a singleton with the trivial topology. 
		For every $x \in R^{m-1}$ and a definable subset $S$ of $R^m$, set $S_x:=\{y \in R\;|\;(x,y) \in S\}$.
		First we reduce to the case in which $\myint(X(k)_x)=\emptyset$ for every $x \in R^{m-1}$.
		
		For $k \in G^n$, set $J (k):=\bigcup_{x \in \pi(X(k))} \{x\} \times (\myint(X(k)_x))$ and $P(k)=X(k) \setminus J(k)$.
		Observe that $J(k)_x$ is open and $\myint(P(k)_x)=\emptyset$ for every $x \in I^{m-1}$. 
		$(P(k))_{k \in G^n}$ is of finite type by the hypothesis of reduction.
		If $(J(k))_{k \in G^n}$ is also of finite type, $(X(k))_{k \in G^n}$ is trivially of finite type.
		We have to prove that $(J(k))_{k \in G^n}$ is of finite type.
		To prove this, we reduce to simpler cases in a step-by-step manner.
		
		For $k \in G^n$, consider the set $Y(k):=\bigcup_{x \in I^{m-1}}\{x\} \times  (\mycl_I(J(k)_x) \setminus J(k)_x)$.
		$(Y(k))_{k \in G^n}$ is of finite type by the hypothesis of reduction.
		This fact and uniform finiteness property of o-minimal structures (cf.\ \cite[Chapter 3, Lemma 2.13]{vdD}) imply that there exists a positive integer $L$ such that, for every $x \in I^{m-1}$ and $k \in G^n$, the cardinality $l(x,k)$ of the $\mathcal I$-definable set $Y(k)_x$ is at most $L$.
		We want to reduce to the case where $l(x,k)$ is independent of choice of $x$ and $k$ whenever $Y(k)_x \neq \emptyset$.
		
		For every $1 \leq l \leq L$ and $k \in G^n$, put $Z\langle l \rangle(k):=\pi^{-1}(\{x \in I^{m-1}\;|\; l(x,k)=l\}) \cap Y(k)$.
		Observe that $Y(k)=\bigcup_{l=1}^L Z\langle l \rangle(k)$ for $k \in G^n$.
		Put $J\langle l \rangle(k):=J(k) \cap \pi^{-1}(Z\langle l \rangle(k))$.
		We have $J(k)=\bigcup_{l=1}^LJ\langle l \rangle(k)$.
		If $(J\langle l \rangle(k))_{k \in G^n}$ is of finite type for each $1 \leq l \leq L$,  $(J(k))_{k \in G^n}$ is so.
		We may assume that $l(x,k)$ is independent of choice of $x$ and $k$ whenever $Y(k)_x \neq \emptyset$, say $l(x,k)=l$, by considering $(J\langle l \rangle(k))_{k \in G^n}$ instead of $(J(k))_{k \in G^n}$.
		
		For every $k \in G^n$ and $x \in \pi(Y(k))$, let $y(x,k,j)$ be the $j$-th smallest element of $Y(k)_x$ for $1 \leq j \leq l$ and put $y(x,k,l+1)=c$ for convenience.
		Let $D$ be the set of sequences of length $l$ whose entries are zeros and ones.
		For $d:=(d_1,\ldots, d_l) \in D$ and $k \in G^n$, consider the sets
		\begin{align*}
			&E \langle d \rangle(k):=\{x \in I^{m-1}\;|\; \text{the interval } (y(x,k,p),y(x,k,p+1)) \text{ is }\\
			&\qquad \text{contained in } J(k)_x \text{ iff } d_p=1 \text{ for each } 1 \leq p \leq l\},\\
			&J [d ](k):=J(k)\cap \pi^{-1}(E\langle d \rangle(k)) \text{ and }\\
			&Y[ d ](k):=Y(k) \cap \pi^{-1}(E\langle d \rangle(k)).
		\end{align*}
		We obviously have $J(k)=\bigcup_{d \in D}J[d ](k)$ and $Y(k)=\bigcup_{d \in D}Y[d ](k)$ for each $k \in G^n$.
		By the assumption of reduction, $(Y[d](k))_{k \in G^n}$ is of finite type for every $d \in D$.
		By the definitions of $(J[d](k))_{k \in G^n}$ and $(Y[d](k))_{k \in G^n}$, $(J[d](k))_{k \in G^n}$ is also of finite type.
		Therefore, $(J(k))_{k \in G^n}$ is of finite type.
		We have finally reduced to the case where $\myint(X(k)_x)=\emptyset$ for every $k \in G^n$ and $x \in I^{m-1}$.
		\medskip
		
		We prove $(X(k))_{k \in G^n}$ is of finite type by induction on $m$.
		Set $V:=\bigcup_{k \in G^n}X(k) \subseteq I^m$.
		We may assume that $\myint(X(k)_x)=\emptyset$ for every $k \in G^n$ and $x \in R^{m-1}$.
		By Claim 2, $V_x:=\bigcup_{k \in G^n}X(k)_x$ is a finite set for $x \in R^{m-1}$.
		
		Suppose $m=1$.
		We have $V_x=V$ in this case.
		This implies that $V$ is a finite set.
		Since $X(k)$ is a subset of $V$,  $(X(k))_{k \in G^n}$ is of finite type.
		
		Suppose $m>1$.
		By uniform finiteness property of o-minimal structures, there exists a positive integer $M$ such that the cardinality of $V_x$ is at most $M$ for every $x \in \pi(V)$.
		For $k \in G^n$ and $1 \leq i \leq M$, let $W\langle i \rangle(k)$ be the set of points $x \in I^{m-1}$ such that $X(k)_x$ contains the $i$th smallest point $v(i,x)$ of $V_x$.
		Put $$X\langle i \rangle(k) = \bigcup_{x \in W\langle i \rangle(k)} \{(x,v(i,x))\}.$$
		$(W\langle i \rangle(k))_{k \in G^n}$ is of finite type by the induction hypothesis. 
		By the definition of $X\langle i \rangle(k)$, $(X\langle i \rangle(k))_{k \in G^n}$ is also of finite type.
		Observe that $X(k)=\bigcup_{i=1}^MX\langle i \rangle(k)$ because $X(k) \subseteq V$.
		This equality implies that $(X(k))_{k \in G^n}$ is of finite type.
	\end{proof}
	
	For every $n>0$, $k\in G^n$ and definable subset $X$ of $R^n$, we set $S(X,k):=\{x \in I^n\;|\; k+x \in X\}$. 
	Observe that $(S(X,k))_{k \in G^n}$ is a uniform $\mathcal R$-definable family of subsets of $I^n$ and the equality $X=\bigcup_{k \in G^n}k+S(X,k)$ holds.
	By Claim 3, there are finitely many $\mathcal I$-definable subsets $X_1,\ldots, X_m$ of $I^n$ such that, for every $k \in G^n$, $S(X,k)=X_i$ for some $1 \leq i \leq m$.
	Put $D_i:=\{k \in G^n\;|\; S(X,k)=X_i\}$ for $1 \leq i \leq m$.
	$D_i$ is definable in $\mathcal G$.
	We have $X=\bigcup_{i=1}^m \bigcup_{k \in D_i} k+X_i$, which implies that $\mathcal R$ is interdefinable with the standard simple product of $\mathcal G$ and $\mathcal I$. 
	
	Finally, $\mathcal G$ is inp-minimal by \cite[Proposition 1.24]{Touchard} and the fact that $\mathcal R$ is interdefinable with the standard simple product of $\mathcal G$ and $\mathcal I$.
\end{proof}

We obtain a result similar to Theorem \ref{thm:local_omin_case} under a relaxed condition if $R=\mathbb R$.

\begin{proposition}\label{prop:real_case}
	Suppose that $R=\mathbb R$ and $\operatorname{Th}(\mathcal R)$ is strong.
	Then, the open core of $\mathcal R$ is either o-minimal or there exist $c>0$ and  an o-minimal structure $\mathcal I$ whose universe is $[0,c)$ such that $\mathcal R$ is interdefinable with a standard simple product of $(c\mathbb Z,<,+)$ and $\mathcal I$.
\end{proposition}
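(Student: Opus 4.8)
The plan is to run the argument behind Theorem~\ref{thm:local_omin_case} with the burden-two hypothesis replaced by strongness, using the rigidity of the real line to compensate. Since $\mathbb R$ is Dedekind complete, $\mathcal R$ is definably complete; hence, combining \cite[Theorem 1.1]{Fuji_core} with \cite[Corollary 2.13(3)]{DG} as recalled in the introduction, the open core $\mathcal R^{\circ}$ of $\mathcal R$ is locally o-minimal. If $\mathcal R^{\circ}$ is o-minimal we are in the first case. Otherwise some $\mathcal R^{\circ}$-definable set $Y\subseteq\mathbb R$ fails to be a finite union of points and open intervals; then its frontier $\mycl(Y)\setminus\myint(Y)$ is $\mathcal R^{\circ}$-definable, closed, and infinite --- were it finite, $Y$ would differ from a finite union of open intervals by a finite set --- and hence discrete by local o-minimality. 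Thus $\mathcal R$ defines an infinite discrete closed subset of $\mathbb R$, and I aim to reach the second case.

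In that situation I would first produce the subgroup. Invoking the Dolich--Goodrick structure theory for definably complete ordered groups with strong theory that define an infinite discrete set (the strong-theory form of \cite[Theorem 1.3]{DG3}) yields an $\mathcal R$-definable discrete subgroup $G$ of $(\mathbb R,+)$ with $(\mathbb R,<,+,G)\equiv(\mathbb R,<,+,\mathbb Z)$ and through which every $\mathcal R$-definable discrete subset of $\mathbb R$ factors; since the infinite discrete subgroups of $\mathbb R$ are exactly the $c\mathbb Z$, we get $G=c\mathbb Z$ for a unique $c>0$, and the elementary equivalence becomes a genuine isomorphism, so that the various elementary equivalences in the proof of Theorem~\ref{thm:local_omin_case} are here literal. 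Put $I:=[0,c)$ and let $\mathcal I:=I_{\mathcal R\text{-def}}$ and $\mathcal G:=G_{\mathcal R\text{-def}}$ be the induced structures. From here I would transcribe that proof: the reductions of \cite[Lemma 2.3]{Fuji3} and \cite[Corollary 4.4]{Fuji4}, together with local o-minimality and the uniform bounds of \cite[Theorem 25]{KTTT}, show that every $\mathcal R$-definable subset of $I$ is a finite union of points and intervals, so $\mathcal I$ is o-minimal; and the finite-type analysis of Claims~1--3 in that proof --- which, once $G$ is available, uses only o-minimality of $\mathcal I$ and \cite[Proposition 2.8]{FKK}, not burden two --- shows that every uniform $\mathcal R$-definable family over $G^{n}$ of subsets of $I^{m}$ is of finite type. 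Consequently $\mathcal R$ is interdefinable with the standard simple product of $\mathcal G$ and $\mathcal I$ under $(g,t)\mapsto g+t$. Finally $\mathcal G$ is inp-minimal by \cite[Proposition 1.24]{Touchard}, and since $G=c\mathbb Z$ one checks that $\mathcal G$ is interdefinable with $(c\mathbb Z,<,+)$, which puts us in the second case.

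The main obstacle is the extraction of the discrete subgroup $G$: the argument of \cite[Theorem 1.3]{DG3} is written for burden two, and one has to check it goes through for strong theories over $\mathbb R$ --- this is precisely where definable completeness of $\mathbb R$ and the cyclicity of its discrete subgroups do the work that burden two does in Theorem~\ref{thm:local_omin_case}. A secondary point is that, in order to conclude that $\mathcal R$ itself --- and not merely its open core --- is a simple product, one needs arbitrary (not only open) $\mathcal R$-definable sets to be topologically tame; this too is supplied by the same Dolich--Goodrick structure theory. Everything after the extraction of $G$ is a routine, if lengthy, transcription of the proof of Theorem~\ref{thm:local_omin_case}.
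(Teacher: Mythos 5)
There is a genuine gap at the crux of your argument: the extraction of the definable subgroup. You propose to invoke ``the strong-theory form of \cite[Theorem 1.3]{DG3}'' and yourself flag that this theorem is proved under the burden-two hypothesis and that ``one has to check it goes through for strong theories over $\mathbb R$.'' That check is precisely the content of the proposition in this setting, and you do not supply it; asserting that definable completeness and cyclicity of discrete subgroups of $\mathbb R$ ``do the work'' is not an argument. The paper takes a different and concrete route that avoids \cite[Theorem 1.3]{DG3} entirely: given an infinite discrete definable $D\subseteq\mathbb R$ (closed by \cite[Proposition 2.8(1)]{FKK}, WLOG bounded below and unbounded above), the definable successor map $s(x)=\inf\{u\in D\mid u>x\}$ produces the gap sequence $d_i=a_{i+1}-a_i$; by \cite[Proposition 2.35]{DG} this sequence is, after discarding finitely many points, periodic with period pattern $(c_1,\dots,c_m)$, and by \cite[Corollary 2.30]{DG} the $c_i$ are commensurable, $c_i=cn_i$. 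From this one writes down $c\mathbb Z$ explicitly as $\bigcup_i Z_i\cup\bigcup_i(-Z_i)$ with $Z_i=\{d+kc\mid d\in D_i,\ 0\le k<n_i\}$. These periodicity and commensurability results are stated for strong expansions of $(\mathbb R,<,+)$ and are exactly the tools your proposal is missing.

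A secondary gap: having obtained $G=c\mathbb Z$, you say ``one checks that $\mathcal G$ is interdefinable with $(c\mathbb Z,<,+)$.'' This is not a routine check and does not follow from inp-minimality of $\mathcal G$; the paper cites \cite[Corollary 2.20]{DG} for it. Once both of these are in place, the conclusion is obtained from \cite[Theorem 25]{KTTT} rather than by re-running Claims 1--3 of Theorem \ref{thm:local_omin_case}, though your transcription of that proof would presumably also work at that stage. Your opening reduction (definable completeness of expansions of $\mathbb R$, local o-minimality of the open core via \cite{Fuji_core} and \cite[Corollary 2.13(3)]{DG}, and the passage from ``open core not o-minimal'' to ``some infinite discrete definable set'') is fine and matches the paper.
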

\begin{proof}
	The open core $\mathcal R^o$ of $\mathcal R$ is locally o-minimal by \cite[Theorem 1.1]{Fuji_core} and \cite[Corollary 2.13(3)]{DG}.
	If every discrete subset of $\mathbb R$ definable in $\mathcal R$ is a finite set, $\mathcal R^o$ is o-minimal.
	
	Suppose that there exists an infinite discrete subset $D$ of $\mathbb R$ definable in $\mathcal R$.
	By \cite[Proposition 2.8(1)]{FKK}, $D$ is closed because of local o-minimality. 
	We may assume that $D$ is bounded below and unbounded above without loss of generality.
	Enumerate elements of $D$ in increasing order, say $a_1, a_2, \ldots$.
	Put $d_i:=a_{i+1}-a_i$ for $i>1$.
	Let $s:D \to D$ be the map given by $s(a_i)=a_{i+1}$.
	The map $s$ is definable because it is defined by $s(x)=\inf\{u \in D\;|\;u >x\}$.
	By \cite[Proposition 2.35]{DG}, after removing a finite set from $D$, there exist $m>0$ and a sequence $\sigma:=(c_1,\ldots, c_m)$ of positive real numbers such that $d_{km+i}:=c_i$ for every $k \geq 0$ and $1 \leq i \leq m$.
	There exist $c>0$ and natural numbers $n_i$ such that $c_i=cn_i$ by \cite[Corollary 2.30]{DG}.
	We may assume that $a_1=0$ by shifting $D$ if necessarily.
	Put $D_i:=\{d \in D\;|\; s(d)-d=c_i \}$ for $1 \leq i \leq m$.
	Observe that $Z_i=\{d+kc\;|\; d \in D_i, 0 \leq k < n_i\}$ is definable.
	$c\mathbb Z$ is also definable because  $c\mathbb Z=\bigcup_{i=1}^m Z_i \cup \bigcup_{i=1}^m (-Z_i)$. 
	
	By \cite[Corollary 2.20]{DG}, $c\mathbb Z_{\mathcal R\text{-def}}$ is interdefinable with $(c\mathbb Z,<,+)$.
	This proposition now follows from \cite[Theorem 25]{KTTT}.
\end{proof}

\end{document}